\newtheorem{theorem}{Theorem}
\newtheorem{lemma}[theorem]{Lemma}
\newtheorem{definition}[theorem]{Definition}
\newtheorem{example}[theorem]{Example}
\newtheorem{examples}[theorem]{Examples}
\newtheorem{remark}[theorem]{Remark}
\newtheorem{remarks}[theorem]{Remarks}
\newtheorem{recall}[theorem]{Recall}
\newtheorem{algorithm}[theorem]{Algorithm}
\newenvironment{proof}{\noindent{\em Proof:}}{$\Box$~\\}
\def \bbF {{\mathbb F}} 
\def \bbZ {{\mathbb Z}}
\def \bbQ {{\mathbb Q}}
\def \bbF {{\mathbb F}}
\def \bbP {{\mathbb P}}
\def \magma {{\sc magma}}
\def \Sym {\text{\rm S}}
\def \Res {\text{\rm R}}
\def \Stab {\text{\rm Stab}}
\def \calB {\mathcal{B}}
\def \Gal {\text{\rm Gal}}
\def \lcm {\text{\rm lcm}}
\def \gcd {\text{\rm gcd}}
\def \Frob {\text{\rm Frob}}
\def \disc {\text{\rm Disc}}
\def \pr {{ n}}
\begin{document}

%\begin{frontmatter}

\title[Subfields and Galois groups]%
{Computing subfields of number fields and applications to Galois group computations}

\author{Andreas-Stephan Elsenhans}
\address{Universit\"at Paderborn,
Fakult\"at EIM,
Institut f\"ur Mathematik,
Warburger Str.~100,
33098~Paderborn,
Deutschland }
%\ead{elsenhan@math.uni-paderborn.de}
%\ead[url]{http://math.uni-paderborn.de/ag/ca/elsenhans/}

\author{J\"urgen Kl\"uners}
\address{Universit\"at Paderborn,
Fakult\"at EIM,
Institut f\"ur Mathematik,
Warburger Str.~100,
33098~Paderborn,
Deutschland }
%\ead{klueners@math.uni-paderborn.de}
%\ead[url]{http://math.uni-paderborn.de/ag/klueners/}

\begin{abstract}
A polynomial time algorithm to find generators of the lattice of 
all subfields of a given number field was given in~\cite{HKN}. 

This article reports on a massive speedup of this algorithm.
This is primary achieved by our new concept of {\it Galois-generating subfields}.
In general this is a very small set of subfields that determine all other 
subfields in a group-theoretic way. We compute them by targeted 
calls to the method from~\cite{HKN}. 
For an early termination of these calls, we give a list of 
criteria that imply that further calls will not result in additional subfields.

Finally, we explain how we use subfields to get a good starting group for the 
computation of Galois groups. 
\end{abstract}

%\begin{keyword}
%Subfields, Galois group, Number fields, LLL
%\end{keyword}
%\end{frontmatter}

\maketitle
  
\section{Introduction}
Given a number field $L  / \bbQ$ of degree $n$, we can ask for a description of all 
 fields $K$ such that $\bbQ \subset K \subset L$. 
As the number of subfields in the
multi-quadratic extension $L_e := \bbQ(\sqrt{d_1},\ldots,\sqrt{d_e})$
is not bounded by a polynomial function in $n=2^e = [L_e : \bbQ]$,
there can not be a polynomial time algorithm for this task.

However, there are less than $n$ subfields $K_1,\ldots,K_m$, such that 
all other subfields are intersections of some of these fields.
Any list of subfields with this property is called a list of 
intersection-generating subfields.
Initially, these fields were called generating subfields, but we prefer to use 
the more precise term intersection-generating as there are other operations 
(e.g.\ composition) that can construct a subfield out of others. 

A subfield $\bbQ[\alpha]$ of $L$ is described by the minimal polynomial of $\alpha$
and an embedding $\alpha \mapsto h(\beta) \in L$ for some polynomial 
$h \in \bbQ[Y]$. Here, $\beta$ denotes the primitive element of $L$ that is used
to represent $L$.
The key point of~\cite{HKN} is that we can find 
the subfield by using linear algebra. 
For efficiency this is done by using the LLL-algorithm~\cite{LLL}. 
From that we derive a primitive element $\alpha$, determine its minimal polynomial
and the embedding. 
At most $(n-1)$ of these steps are necessary to compute all so called principal 
subfields.  
Note, that a list of all principal subfields is intersection-generating.
The runtime depends heavily on the number of LLL-calls. 
In this note we will show
how hard cases ($n = 60,\ldots,100$) can be done with very few (usually $< 10$) 
LLL-calls.

The subfields found by these LLL-calls are not intersection-generating.
But, they are {\it Galois-generating}. This is the new term that 
we introduce in this article.
Using the Galois-generating fields we compute a group which has the same
block systems as the Galois group. 
As the block systems are in bijection to the subfields, we have a 
complete description of the subfield lattice.
The intersection generating subfields correspond to block systems with special 
properties. 
Thus, the remaining intersection-generating subfields can be computed 
much more efficiently compared to the LLL-method. 

In the above example $\bbQ(\sqrt{d_1},\ldots,\sqrt{d_e})$, the fields 
$$K_i := \bbQ(\sqrt{d_1},\ldots,\sqrt{d_{i-1}},\sqrt{d_{i+1}}\ldots,\sqrt{d_e})  
\mbox{ for } 1 \leq i \leq e$$ are Galois-generating. The fields $\bbQ(\sqrt{d_i})$
for $1 \leq i \leq e$ are Galois-generating, as well.  

A further improvement is that we skip LLL-calls that would
reproduce subfields that are already known. To detect this, we give a
list of criteria in Algorithm~\ref{lattice_test}.
 
All algorithms are of a $p$-adic nature. That means we work with
$p$-adic root approximations in a $p$-adic splitting field of $f$, the
minimal polynomial of a primitive element $\beta$ of $L/\bbQ$.  The
prime $p$ is chosen by the algorithms.

To derive the other subfields from the Galois-generating ones we have to compute
the intersection of certain wreath products. For this we give a graph-theoretic 
algorithm in section~\ref{sec:int}. The resulting group will be an overgroup of 
the Galois group of $f$. Thus, it can be used as a starting group for the 
computation of the Galois group of $f$ by the Stauduhar~\cite{St,Ge,FK} method.
In section~\ref{sec_applications} we give an algorithm to refine this starting group.

All the algorithms described here are available in \magma~2.23~\cite{BCP}.

\subsubsection*{Composition-generating subfields}
As an algebraic structure, the subfields form a lattice. Thus, one could ask for other ways of 
generating this lattice. For example one could ask for a set of subfields, that 
generate all subfields by composition. We suggest to call these composition-generating 
subfields. 
% All the minimal subfields are contained in every set of composition-generating subfields.
As an application, one can construct the maximal cyclotomic, abelian, or normal subfield 
out of these easily. 
But, it seems to be a challenging question that is almost independent of this investigation.

\subsubsection*{Acknowledgement}
Many thanks to N.\ Sutherland for writing the first \magma\ implementation 
of the~\cite{HKN} method that served as a 
starting point of this investigation.

\section{Notation}

In this article, we will consider the number field $L = \bbQ(\beta)$ 
with minimal polynomial $f \in \bbZ[x]$. 
The letter $n$ denotes the degree of $L$.
We denote the normal closure of $L$ by $\tilde{L}$.
The roots of $f$ in $\tilde{L}$ are denoted by $\beta = \beta_1,\ldots,\beta_n$.
For simplicity we assume the $\beta_i$ to be algebraic integers. 
%We denote approximations of the $\beta_i$ by $r_i$. 

The letter $K$ denotes a subfield of $L$. The minimal polynomial of a
primitive element $\alpha$ of $K$ is called $g$. In the case that we are dealing with 
several subfields we add indices to $K$, $\alpha$, and $g$. 

We will use the term Galois group in the following way: The Galois group of
a polynomial $f$ is the Galois group of a splitting field represented as a permutation 
group acting on the indices of the roots of $f$. The Galois group
of a number field is the Galois group of the minimal polynomial of a primitive element. 

Any permutation group that is an overgroup of the Galois group of $f$ is called a 
{\it Galois starting group} of $f$. 

When we give examples that involve permutation groups, we write $n$T$k$ for 
the $k$-th transitive group of degree $n$ in the database of transitive 
groups~\cite{Hul,CH}. Currently this is available in \magma\ for $n < 48$. 

\section{Galois theory of subfields and block systems}

\begin{recall}(Partitions) \label{recall_blocksys}
Let $G$ be a transitive permutation group of degree $n$. We will
use the following terms and facts~\cite{DM,HEB, Hup}.
\begin{enumerate}
\item \label{blocks_def}
A partition or system of blocks $\calB = \{B_1,\ldots,B_k\}$ is a set of disjoint, 
non-empty subsets of $\{1,\ldots,n\}$ that cover all of $\{1,\ldots,n\}$ 
such that $\sigma B_i \in \calB$ for all $\sigma \in G$ and 
$i = 1,\ldots,k$. The $B_i$ are called blocks.
\item
Let $\calB_1$ and $\calB_2$ be two systems of blocks. In the case that 
each block of 
$\calB_1$ is a subset of a block of $\calB_2$, we call $\calB_1$ finer than
$\calB_2$.
\item
There is a 1--1 correspondence between the super-groups of the stabilizer 
of $1$ and the block systems. It is given by mapping a system of blocks 
to the stabilizer of the block containing 1~\cite[Theorem 1.5A]{DM}.
\item
The largest subgroup of $\Sym_{\ell k}$ that has a system of $\ell$ blocks of size $k$
is the wreath product $\Sym_k \wr \Sym_ \ell$. It is isomorphic to the
semi-direct product $\Sym_k^\ell \rtimes \Sym_\ell$. The action on $\Sym_k^\ell$ is given 
by permutation of the components.
\end{enumerate}
\end{recall}

\begin{recall}(Partitions and $G$-congruences)
It is well known that there is a 1--1 correspondence between all partitions of a set and all
equivalence relations defined on the set~\cite[Chap. 6.5 Th. 2]{Rosen}. Thus, all statements
about block systems can be translated into the language of equivalence relations.
Let $\calB = \{B_1, \ldots, B_k \}$ be a block system and $\sim$ the corresponding relation.
\begin{enumerate}
\item
We have $j_1 \sim j_2$ if and only if $\exists i \in \{1,\ldots, k\} \colon j_1, j_2 \in B_i$.
I.e., the equivalence classes of the relation are exactly the blocks.
\item
The $G$-compatibility of the partition (Recall~\ref{recall_blocksys}.\ref{blocks_def})  
turns $\sim$ into a relation with
$$
\forall j_1,j_2 \in \{1,\ldots,n\}, g \in G \colon j_1 \sim j_2 \Longrightarrow g j_1 \sim g j_2.
$$
An equivalence relation with this property is called a $G$-congruence~\cite[Def. 2.26]{HEB}.
Conversely, the equivalence classes of a $G$-congruence are a system of blocks for~$G$.
\item
For each subset $S \subset \{1,\ldots,n\}^2$ there is a $G$-congruence $\sim$ generated by 
$S$~\cite[Def. 2.27]{HEB}. This is the smallest $G$-congruence $\sim$ with
$$
\forall (j_1,j_2) \in S \colon j_1 \sim j_2\, .
$$
\item
A $G$-congruence and the corresponding system of blocks are called principal, 
if the $G$-congruence is generated by one pair.
\item
As $G$ is transitive, we can generate each principal $G$-congruence by $\{(1,j)\}$.
Thus, there are at most $n-1$ principal partitions.
\item
Let $\calB$ be a system of blocks and $\sim$ be the corresponding $G$-congruence.
Further, denote by $B_1 \in \calB$ be the block containing 1.
Then the principal $G$-congruences that are finer than $\sim$ are exactly those
that are generated by $(1,i)$ for $i \in B_1, i \not= 1$.

Further, $\sim$ is the transitive closure of the union of all 
these principal $G$-congruences.
%principal $G$-congruences described above.
\item
More generally, let $\calB_1,\ldots,\calB_r$ be some systems of blocks.
Then there is a finest system of blocks $\calB$, such that all the 
$\calB_i$ are finer than $\calB$.
The $G$-congruence corresponding to $\calB$ is generated by
$$
\{(j_1, j_2) \colon j_1,j_2 \in B, B \in \calB_1 \cup \ldots \cup \calB_r \}
\, .
$$
In other words, it is the transitive closure of the 
union of all the $G$-con\-gru\-ences corresponding to the $\calB_i$.
\end{enumerate}
\end{recall}

\begin{recall}(Relations and graphs)
A relation $R$ on a finite set $M$ can be encoded in a graph by taking $M$ as the 
set of vertices. The directed edge $(a,b)$ is part of the graph if and only if $(a,b) \in R$. 
As we are only interested in symmetric relations, we can work with undirected graphs. 
\begin{enumerate}
\item
Let $\sim$ be an equivalence relation on $M$ and $G$ be the corresponding graph. 
The vertices of the connected components of $G$ are the equivalence classes of~$\sim$.
\item
Let $\sim$ be a symmetric relation on $M$ and $G$ be the corresponding graph. 
The vertices of the connected components of $G$ are the equivalence classes of 
the smallest equivalence relation containing $\sim$~\cite[Chap. 6.4]{Rosen}.
\item
Thus, we can use graph algorithms to compute connected components of undirected 
graphs \cite[Chap 22.5]{CLRS} to handle the transitive closure of a given relation.
\end{enumerate}
\end{recall}

\begin{remark}
The main theorem of Galois theory gives us a 1--1 correspondence between the 
intermediate fields of a normal closure of a field extension and the 
subgroups $U$ of its Galois group. Two subgroups $U_1 \subset U_2$ correspond 
to two subfields $K_2 \subset K_1$. I.e., the correspondence is 
inclusion-reversing.

We view the Galois group $G$ of $L$ as a subgroup of $\Sym_n$ 
that acts on the indices of the roots of $f$. Then the stem field $L$ 
corresponds to the stabilizer of the index of the first root. 
This shows that we have a 1--1 
correspondence between the block systems of $G$ and the intermediate fields 
of $L / \bbQ$.
\end{remark}

\begin{remark}
The principal subfields as defined in~\cite[Def. 3]{HKN} correspond 
to the principal block systems defined above. In the notion of~\cite{HKN} 
the principal subfields are a generating set. The fields in the (unique) 
smallest generating set are called the generating subfields. In this 
article we use the principal subfields as an (intersection-)generating set.
We do not aim for the smallest intersection-generating set. It is easy to
describe it in relation theoretic terms. 
\end{remark}

We can use the following algorithm to compute all the principal
$\Gal(\tilde{L}/\bbQ)$-congruences.
\begin{algorithm}[Principal systems of blocks by factoring]
Denote by $\beta_1,\ldots,\beta_n \in \tilde{L}$ the roots of the polynomial defining $L$.
\begin{enumerate}
\item
Find a shift $s \in \bbZ$, such that 
$\Res_2(X) := \prod_{i < j} (X - (\beta_i + s)(\beta_j + s))$  is squarefree.
Compute $\Res_2$ with the method given in \cite{BFSS} without computing the $\beta_i$.
\item
Factor $\Res_2$.
\item \label{identify_res2_orbits}
For each irreducible factor $F$ of $\Res_2$, identify which pairs of roots are 
encoded in it. I.e., determine $R_F := \{ (i,j) \mid F((\beta_i + s)(\beta_j + s)) = 0\}$.
\item
For each relation $R_F$, compute the reflexive, transitive, and symmetric closure. 
\item
Return the equivalence relations found as a complete list of principal 
$\Gal(\tilde{L}/\bbQ)$-congruences. 
\end{enumerate}
\end{algorithm}

\begin{remarks}\label{subfields_from_generating_subfields} 
\begin{enumerate}
\item
Assume the above algorithm results in the (principal) $G$-con\-gru\-ences $R_1,\ldots,R_k$. 
Then all other $G$-congruences
are given as the transitive closure of 
$R_{i_1} \cup \cdots \cup R_{i_m}$, for any choice of indices 
$i_1,\ldots,i_m \in \{1,\ldots,k\}$.

In other words, the principal subfields are intersection-generating.
\item
As $G$ acts transitively on $\{1,\ldots,n\}$, there is no factor of $\Res_2$
of degree less than $\frac{n}{2}$. Thus, we 
reproduce the upper bound of $(n-1)$ intersection-generating subfields.
\item
It is known that the computation of $\Res_2$ can be done in  
polynomial time~\cite{BFSS} and polynomial factorization is polynomial time 
as well~\cite{LLL}.
Thus, we have a polynomial time algorithm to compute inter\-section-generating 
subfields as soon as we can do step~\ref{identify_res2_orbits} in polynomial time 
by using approximations of the roots. 
A solution to this is given in~\cite[p. 255]{Kl}. Denote by $r_i$ 
$p$-adic approximations of the roots $\beta_i$.
Then, one can work with the smallest field which is sufficient to represent the
polynomial $\prod_{i \in B} (X - r_i)$ for a block $B$. 
Note that, the degree of the field of definition of this polynomial is bounded 
by the degree of the subfield we are looking for.

Thus, we have a polynomial time algorithm that computes all principal subfields.
\item
In case we ask for the list of all subfields instead of a list of
intersection-generating subfields we can either compute the intersections
directly, or we can use the description of the subfields in terms of 
systems of blocks. This means, we have to list all equivalence relations
that result as the transitive closure of the union
of some equivalence relations already known.
For this we suggest to use an approach similar to~\cite[Sec.~2.2]{HKN}. The advantage is, 
that run time is linear in the number of subfields.
\end{enumerate}
\end{remarks}

Assume that ($p$-adic) approximations $r_1,\ldots,r_n$ of the roots of $f$ are given.
We need algorithms to make the correspondence between
subfields and block systems explicit. For this we first introduce block invariants.

\begin{definition}
Let $M$ be a nontrivial subset of $\{1,\ldots,n\}$. 
A polynomial $I \in \bbZ[X_1,\ldots,X_n]$ is called a {\it block invariant} for $M$,
if $I$ only involves the variables $\{X_k : k \in M\}$ and is invariant under
all permutations of the elements of $M$. 
\end{definition}

\begin{example}
A nontrivial block invariant of smallest degree is given by $\sum_{i \in M} X_i$.
More generally, we can use $\sum_{i \in M} T(X_i)$ with any transformation 
$T \in \bbZ[X]$. 

Other block invariants are given by 
$\prod_{i \in M} X_i$ and $\prod_{i \in M} (X_i + s)$
for any $s \in \bbZ$.
\end{example}

\begin{remark}
In the language of invariant theory, a block invariant $I$ is also called a {\it relative invariant}
for $\Stab_{\Sym_n}(M) \subset \Sym_n$. The interested reader may consult~\cite{E2}
for more constructions of relative invariants.
\end{remark}

\begin{remark}[Block system to a subfield]\label{blocks_from_subfield}
Let $L / K / \bbQ$ be a tower of fields. Further, 
let $h(\beta)$ be a primitive element of the degree $m$ subfield $K$. 

In this situation $i$ and $j$ are in the same block (of the block system 
corresponding to $K$) if and only if $h(\beta_i) = h(\beta_j)$. In the case that we 
work with root approximations $r_1,\ldots,r_n$ we can 
use $h(r_i) \not= h(r_j)$ implies $h(\beta_i) \not= h(\beta_j)$,
but 
% easily prove $h(\beta_i) \not= h(\beta_j)$, because 
if the chosen precision is not sufficient, it might be that 
$h(r_i)=h(r_j)$ and $h(\beta_i)\not=h(\beta_j)$.
As the total number of blocks is $m$ and each 
block has $n/m$ elements, this suffices to determine the block system. 
\end{remark}

The explicit description of the inverse operation is more complicated as we 
have to compute the minimal polynomial of a primitive element $\alpha$ of 
the subfield and its image in $L$.%$\tilde{K}$.

\begin{algorithm}[Subfield from block system]\label{subfield_from_blocks}
Given a system of blocks $\calB = \{B_1,\ldots,B_m\}$ for the 
root approximations $r_1,\ldots,r_n$ of $f$. We assume the roots $\beta_1,\ldots,\beta_n$ 
of $f$ to be algebraic integers.
This algorithm computes a defining polynomial $g$ for the corresponding 
subfield by using only the approximations $r_i$ in some extension of $\bbQ_p$.
\begin{enumerate}
\item
Compute a bound $C$ of the absolute values of the roots of $f$. 
(E.g., the Fujiwara-bound.)
\item
Find a non-degenerated block invariant 
$I = \prod_{i \in B_1} (X_i - s) \in \bbZ[X_1,\ldots,X_n]$ 
for some $s\in\bbZ$. Here, non-degenerated means, that the values 
$I_k := \prod_{i \in B_k} (\beta_i - s)$ on all blocks are pairwise distinct. 
In the case that $p > n^2$, we can even find an $s$
such that these products are different in the residue class field 
of the $p$-adic splitting field~\cite[Lemma 42]{Kl}.
\item
The $I_k$ are algebraic integers. All complex conjugates are bounded by $C' := (C+s)^{\#B_1}$. 

The coefficients of $g := \prod_{k=1}^m (X - I_k)$ are integers with absolute 
value at most $C'' := (C'+1)^m$. 
\item
Compute $p$-adic approximations of the roots $r_i$, the values of $I_k$ and the 
coefficients of $g$ with precision at least 
$\lceil \log_p(2 C'') \rceil$. 
\item
Reconstruct $g \in \bbZ[X]$ from its $p$-adic approximation.
\end{enumerate}
\end{algorithm}

The embedding is given as the solution of the interpolation problem
$h(r_i) = I_k$ for all $i \in B_k$ and all $k \in \{1,\ldots,m\}$.
Here, $h$ is a polynomial of degree at most $(n-1)$ with rational coefficients. 
It defines the embedding $\iota \colon K \rightarrow L$ via 
$\iota(\alpha) = h(\beta)$. 

The conditions $h(r_i) = I_k$ for all $i,k$ with $i \in B_k$ above encode an 
interpolation problem that translates to a uniquely solvable linear 
system for the coefficients of $h$.
To solve it efficiently, we use Newton-lifting as follows.

\begin{algorithm}[Embedding of subfield] \label{subfield_embedding}
Let $L / K /\bbQ$ be a tower of fields  and the block invariant used to construct $K$
be given. This algorithm computes the embedding of $K$ in $L$ by determining~$h(\beta)$.
\begin{enumerate}
\item
Solve the interpolation problem in the residue class field of the $p$-adic splitting field
directly. As we assume the roots to be pairwise distinct in the residue class field
the system has still a unique solution.
We get $h_0 \in \bbZ[X]$ with
$g(h_0(\beta)) \equiv 0 \bmod p$.   
\item
%Compute the inverse of %the derivative of 
%$g'$ at $h_0(\beta)$.% this point. 
%I.e.\ 
Find $v_0 \in \bbZ[\beta]$ with $v_0 g'(h_0(\beta)) \equiv 1 \bmod p$.
\item
Lifting these initial $p$-adic approximations results in the sequences
\begin{eqnarray*}
h_{n} &:=& \left(h_{n-1} -  g(h_{n-1}) \cdot v_{n-1}\right) \bmod p^{2^n} \\ 
v_{n} &:=& \left(v_{n-1} - (g'(h_{n-1}) \cdot v_{n-1} - 1) v_{n-1}\right) \bmod p^{2^n} \, .
\end{eqnarray*}
\item
In general, only $h \cdot f'(\beta)$ is known to be an element of the
equation order $\bbZ[\beta]$.  
Thus, we compute $h_n \cdot f'(\beta) \bmod p^{2^n}$. 
In the case that 
all coefficients can be represented by integers of absolute value (significantly) 
smaller than $\frac{1}{2} p^{2^n}$ (e.g., smaller than $10^{-6} p^{2^n}$), 
we assume this representative coincides with $h \cdot f'(\beta)$. 
In this case, a single division results in a guess for $h$.
Otherwise we increment $n$ and go back to step 3 to continue the lifting process.
\item
As soon as we have a guess for $h$ we check it by testing $g(h(\beta)) = 0$.
If this check is passed, we return $h$ as the embedding. Otherwise, we
increment $n$ and go back to step 3 to continue the lifting process.
\end{enumerate}
\end{algorithm}

\begin{remark}
When computing the final check $g(h(\beta)) = 0$ an interesting phenomenon occurs. 
If the check is passed, the computation is a lot faster compared to the case of a failure.

This is explained by the fact that the correct $h(\beta)$ and all the intermediate results 
of the computation of $g(h(\beta))$ are algebraic integers.
Therefore, all denominators are 
bounded by the index of the equation order in the maximal order. 
In the case of an incorrect guess, there is no reason for a denominator bound. 

That explains why we do the final check only in the case that we have a good 
reason to believe that we are already correct.
\end{remark}
 
\section{Using cycle types \label{cycle_section}}
Given an irreducible polynomial $f \in \bbZ[X]$ and a prime number $p$ that 
does not divide the discriminant of $f$, we can easily compute the roots of 
the reduction of $f$ modulo $p$ in an extension of $\bbF_p$ together with
the action of the Frobenius on the roots. Is is well known, that this 
permutation is an element of the Galois group of $f$ that can be identified 
up to conjugation. Further, by Chebotarev's density theorem
each conjugacy class of the Galois group occurs for infinitely many choices of $p$. 

If we would be able to match the roots of the reductions modulo different 
primes, we would be able to generate the Galois group of the
splitting field heuristically. 

This indicates, that we can get some information about the Galois group 
by inspecting the factorization of $f$ modulo several primes. But, as we do not 
know how to identify roots of different reductions we have to use indirect 
approaches to get something out of this. 

Currently, we are only able to use the cycle type of the Frobenius element. This
coincides with the degrees of the irreducible factors of $f$ modulo $p$. 
Note that fixed points are always included in a cycle type.

\subsection{A divisor for the order of the Galois group} \label{divisor_of_order}
Our algorithm will need a lower bound of the order of the Galois group. For this we will
use the following.
\begin{lemma}
Let $(n_1,\ldots,n_j)$ be the cycle type of an element of the transitive
permutation group $G \subset \Sym_n$. Then
$$
n \frac{\lcm(n_1,\ldots,n_j)}{\gcd(n_1,\ldots,n_j)} \mid \#G \,. 
$$
\end{lemma}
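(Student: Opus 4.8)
The plan is to combine two divisibility facts: first, that the order of any cyclic subgroup divides $\#G$, and second, that transitivity forces $n \mid \#G$ via the orbit-stabilizer theorem. Let $\sigma \in G$ have cycle type $(n_1,\ldots,n_j)$. Then the order of $\sigma$ is $m := \lcm(n_1,\ldots,n_j)$, so $m \mid \#G$ by Lagrange. Since $G$ is transitive on $\{1,\ldots,n\}$, the orbit of any point has size $n$, hence $n \mid \#G$. The remaining work is to show that the (potentially larger) number $n \cdot m / \gcd(n_1,\ldots,n_j)$ still divides $\#G$; knowing $n \mid \#G$ and $m \mid \#G$ separately only gives $\lcm(n,m) \mid \#G$, which is generally weaker.

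The key idea I would use is to exhibit an explicit subgroup of $G$ of the claimed order, or to refine the orbit-stabilizer count. Pick a point, say $1$, lying in a cycle of $\sigma$ of length $n_1$ (after relabeling we may take the first cycle). Let $H := \langle \sigma \rangle$, so $\#H = m$. The orbit of $1$ under $H$ is exactly the cycle of length $n_1$ containing it, so the stabilizer $H_1 := \Stab_H(1)$ has order $m/n_1$. Now consider the stabilizer $G_1 := \Stab_G(1)$; by transitivity $\#G = n \cdot \#G_1$. Since $H_1 \subseteq G_1$, we get $m/n_1 \mid \#G_1$, hence $n \cdot (m/n_1) \mid \#G$. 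The catch is that $n_1$ was an arbitrary entry of the cycle type — the same argument with the cycle of length $n_i$ gives $n \cdot (m/n_i) \mid \#G$ for every $i$. Taking the least common multiple over all $i$ of the left-hand sides, and using that $\lcm_i\bigl(n\, m/n_i\bigr) = n\, m / \gcd_i(n_i) = n\,\lcm(n_1,\ldots,n_j)/\gcd(n_1,\ldots,n_j)$ (pulling the common factor $n\,m$ out: $\lcm_i(m/n_i) = m/\gcd_i(n_i)$), we obtain the stated divisibility.

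I expect the main obstacle to be the final arithmetic step, namely verifying $\lcm_i(n\,m/n_i) = n\,m/\gcd_i(n_i)$ and being careful that $\lcm$ of a set of divisors of $\#G$ still divides $\#G$ (which is immediate, since $\#G$ is a common multiple). This identity is a routine fact about lcm and gcd — for each prime $\ell$, the $\ell$-adic valuation of $m/n_i$ is $v_\ell(m) - v_\ell(n_i)$, so the max over $i$ is $v_\ell(m) - \min_i v_\ell(n_i) = v_\ell(m) - v_\ell(\gcd_i n_i) = v_\ell(m/\gcd_i n_i)$ — but it should be written out cleanly. One subtlety worth flagging: one must make sure that for each $i$ there really is a point whose $\langle\sigma\rangle$-orbit has size exactly $n_i$, which is precisely what "cycle type" means, so this is free. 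Altogether the proof is short; its only content beyond orbit-stabilizer and Lagrange is the observation that one may localize the stabilizer argument to a single $\sigma$-cycle.
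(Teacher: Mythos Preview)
Your proof is correct and is essentially the same as the paper's. The paper phrases the key step as ``$\sigma^{n_i}$ has a fixed point, hence lies in a conjugate of the point stabilizer $U$, hence its order $\lcm(n_1,\ldots,n_j)/n_i$ divides $\#U$''; you phrase it as ``the $\langle\sigma\rangle$-stabilizer of a point in the $i$-th cycle has order $m/n_i$ and sits inside the $G$-stabilizer of that point, which has order $\#G/n$'' --- but these are the same observation, since the $\langle\sigma\rangle$-stabilizer of such a point is exactly $\langle\sigma^{n_i}\rangle$. Your write-up additionally spells out the identity $\lcm_i(m/n_i)=m/\gcd_i(n_i)$ via valuations, which the paper simply asserts.
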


\begin{proof}
Denote by $U$ the index $n$ subgroup of $G$ that stabilizes one point and by 
$\sigma$ an element of $G$ with cycle type $(n_1,\ldots,n_j)$.
Then $\sigma^{n_i}$ has at least one fixed point. Therefore, it is contained in
a subgroup conjugate to $U$. Thus, the order of $\sigma^{n_i}$ is a divisor of $\#U$.
This shows
$$
\frac{\lcm(n_1,\ldots,n_j)}{n_i} \mid \# U
\,. 
$$
Thus,
$$
\lcm \left\{\frac{\lcm(n_1,\ldots,n_j)}{n_i} : i = 1,\ldots,j \right\}
= \frac{\lcm(n_1,\ldots,n_j)}{\gcd(n_1,\ldots,n_j)}
$$
is a divisor of $\#U$ as well. As $U$ is of index $n$ in $G$ the claim is proved.
\end{proof}

The lemma above uses only  one cycle type and the fact that $f$ is irreducible to derive a divisor 
of the group order. In practice, we apply the lemma to several (e.g. $n$) primes and form the lcm of
all divisors found.

To do even better we have to combine several cycles in a more subtle way. 
This is done by the following lemma.

\begin{lemma}
Let $G \subset \Sym_n$ be a $p$-group and $\sigma,\tau \in G$ be two elements of order $p^e, p^f$.
Assume that the numbers of points with trivial stabilizer 
(i.e.,\ with orbit of length $p^e$ resp.\ $p^f$) in $\langle \sigma \rangle$ and 
in  $\langle \tau \rangle$  do not coincide, then the order of $G$ is divisible by $p^{e+f}$.
\end{lemma}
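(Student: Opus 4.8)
The plan is to argue by contradiction, after a harmless reduction. Since $\#\langle\sigma,\tau\rangle\mid\#G$ and the two orbit counts in the statement are unaffected if we replace $G$ by $\langle\sigma,\tau\rangle$, it suffices to prove the lemma for $H:=\langle\sigma,\tau\rangle$ in place of $G$. So I would assume $p^{e+f}\nmid\#H$; since $H$ is a $p$-group this means $\#H=p^{m}$ with $m\le e+f-1$, and the goal becomes to show that the two counts are then equal, contradicting the hypothesis. (We may assume $e,f\ge1$, as otherwise $p^{e+f}\mid\#G$ holds for obvious reasons.)

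The next step is to rewrite each count in terms of a single element of order $p$. Because $\langle\sigma\rangle$ is cyclic of order $p^{e}$, its subgroups form a chain, so the stabilizer of a point $x$ in $\langle\sigma\rangle$ is trivial precisely when $\sigma^{p^{e-1}}x\ne x$; hence the number of points with $\langle\sigma\rangle$-orbit of length $p^{e}$ equals $n-\#\mathrm{Fix}(\sigma^{p^{e-1}})$. Since a point fixed by a generator of a cyclic group is fixed by the whole group, $\mathrm{Fix}(\sigma^{p^{e-1}})=\mathrm{Fix}(C_{\sigma})$, where $C_{\sigma}:=\langle\sigma^{p^{e-1}}\rangle$ is the unique subgroup of order $p$ in $\langle\sigma\rangle$. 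Symmetrically, the second count equals $n-\#\mathrm{Fix}(C_{\tau})$ with $C_{\tau}:=\langle\tau^{p^{f-1}}\rangle$. Hence it is enough to establish $C_{\sigma}=C_{\tau}$.

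To get this, I would invoke the product formula $\#\bigl(\langle\sigma\rangle\langle\tau\rangle\bigr)=p^{e}p^{f}/\#\bigl(\langle\sigma\rangle\cap\langle\tau\rangle\bigr)$. The set $\langle\sigma\rangle\langle\tau\rangle$ lies in $H$, so $\#\bigl(\langle\sigma\rangle\cap\langle\tau\rangle\bigr)\ge p^{e+f-m}\ge p$. A nontrivial subgroup of a cyclic $p$-group contains that group's unique subgroup of order $p$; applying this to $D:=\langle\sigma\rangle\cap\langle\tau\rangle$, seen once as a subgroup of $\langle\sigma\rangle$ and once as a subgroup of $\langle\tau\rangle$, yields $C_{\sigma}\subseteq D$ and $C_{\tau}\subseteq D$. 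Since $D$ is cyclic it has only one subgroup of order $p$, so $C_{\sigma}=C_{\tau}$; therefore $\mathrm{Fix}(C_{\sigma})=\mathrm{Fix}(C_{\tau})$, the two counts coincide, and we have our contradiction.

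Everything above is elementary; the single real idea is to localise the problem at the order-$p$ subgroups $C_{\sigma}$ and $C_{\tau}$, after which the hypothesis $\#G<p^{e+f}$ is precisely what forces $\langle\sigma\rangle\cap\langle\tau\rangle$ to be large enough to contain both of them. The remaining pieces---the subgroup chain of a cyclic $p$-group, the product formula, and the behaviour of fixed-point sets---are routine, and transitivity of $G$ is never used, so I do not anticipate a serious obstacle.
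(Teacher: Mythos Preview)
Your argument is correct, and it is genuinely different from the paper's. Both proofs start from the same observation---that in a cyclic $p$-group the points with trivial stabilizer are exactly the points moved by the unique subgroup $C$ of order~$p$, so the two counts agree as soon as $C_\sigma=C_\tau$---but the paper does not pass to $\langle\sigma,\tau\rangle$ or use the product formula. Instead it runs a minimal-counterexample induction on $e+f$: choosing a point $x$ with trivial $\langle\sigma\rangle$-stabilizer, it sets $M=\langle\tau\rangle x$ and $U=\Stab_G(M)$, shows $\tau\in U$ but $\sigma\notin U$ (since the $\sigma$-orbit of $x$ is too long to sit inside $M$), and then applies the inductive hypothesis to $U$ with $\sigma^{p^k}$ and $\tau$, where $p^k=[\langle\sigma\rangle:\langle\sigma\rangle\cap U]$. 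Your route is shorter and avoids the case split $e>f$ versus $e=f$: the single inequality $|\langle\sigma\rangle\langle\tau\rangle|\le |H|<p^{e+f}$ forces $\langle\sigma\rangle\cap\langle\tau\rangle\neq 1$ and hence $C_\sigma=C_\tau$ immediately. The paper's argument, on the other hand, keeps the permutation action in the foreground and would adapt more readily if one wanted finer information about how $\sigma$ and $\tau$ interact on orbits.
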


\begin{proof}
Let $e,f$ with $e \geq f$ be two integers with $e+f$ minimal such that the claim is not correct. 
We have $e,f > 0$ as otherwise the claim of the lemma is trivial. 

First observe that $\langle \sigma \rangle$ and $\langle \tau \rangle$ are cyclic $p$-groups. 
Thus, they have a unique minimal subgroup. This shows that the set of points with trivial 
stabilizer is unchanged when we switch to a nontrivial subgroup. 

In the case that $e > f$ we take a point $x$ with trivial stabilizer in $\langle \sigma \rangle$. 
This point has an orbit $M := \langle \tau \rangle x$. Now we inspect the subgroup $U := \Stab_G(M)$.
Obviously we have $\tau \in U$ and $\sigma \not \in U$. Let $k$ be the smallest integer 
such that $\sigma^{p^k} \in U$. Then we have $k > 0$ and $p^k | [G:U]$. 
In the case $k = e$ we are done. Otherwise $U$ contains the nontrivial element 
$\sigma^k$ of order $p^{e - k}$.  As the points with trivial stabilizer 
in $\langle \sigma \rangle$ and in $\langle \sigma^{p^k} \rangle$ 
coincides, we can conclude $p^{f + e - k} | \# U$ and $p^{e + f} | \# G$. 

In the case $e = f$ we assume that the number of points with trivial stabilizer for  
$\langle \sigma \rangle$ is bigger than for $\langle \tau \rangle$. Thus, there is a 
point $x$ having an orbit of length $p^e$ with respect to $\langle \sigma \rangle$ 
and another (shorter) orbit with respect to $\langle \tau \rangle$.
We choose $M := \langle \tau \rangle x$. Now we inspect the stabilizer $U := \Stab_G(M)$. 
We continue as above. We get $\tau \in U$ and $\sigma \not \in U$. 
Setting $k$ to the smallest integer with $\sigma^{p^k} \in U$ we conclude 
$k > 0$, $p^k | [G : U]$ and $p^{e + f - k} | \#U$. Thus, the claim is proven.
\end{proof}

\begin{remark}
In the case of a transitive group $G \subset \Sym_n$ the above lemma can be applied to all 
cycles of $p$-power order having a fixed point. Then we can determine a divisor of 
$U := \Stab_G(1)$. As this is a subgroup of index $n$. We can multiply the
divisor of the order of $\#U$ by $n$ and still get a divisor of $\#G$.
\end{remark}

\subsection{Exclusion of block sizes}

Is is well known, that the block size of a block system of a subgroup of $\Sym_n$ is a 
divisor of $n$. A priori, all divisors of $n$ are possible block sizes. 
We use the following to exclude as many block sizes as possible. If we can 
exclude all of them, we have proved that the field is primitive. Thus, we can skip 
the entire subfield search.
As each permutation group with a block of size $k$ is contained in a wreath product
of the form $\Sym_k \wr \Sym_\ell$ we can use the following lemma.

\begin{lemma}\label{lemma_block_size}
Let $\sigma \in \Sym_k \wr \Sym_\ell$ be an element with cycle type $(n_1,\ldots,n_j)$. 
Then, for each index $m = 1,\ldots,j$
there is an integer $e$ and an index set $I_m \subset \{1,\ldots,j\}$ 
with $m \in I_m$ such that
$$
\forall i \in I_m \colon e \mid n_i 
~\mbox{and}~ \sum_{i \in I_m} n_i = e \cdot k\, .
$$
\end{lemma}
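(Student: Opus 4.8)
The plan is to exploit the natural block structure of the wreath product. Write $W := \Sym_k \wr \Sym_\ell$ for the group acting on a set $X$ of $k\ell$ points, and recall (Recall~\ref{recall_blocksys}) that $X$ carries a system of $\ell$ blocks $B_1,\dots,B_\ell$, each of size $k$, that $W$ permutes, so that $\sigma$ induces a permutation $\bar\sigma \in \Sym_\ell$ of the blocks. The first step is the observation that every cycle of $\sigma$ on $X$ is contained in the union of the blocks indexed by a single cycle of $\bar\sigma$: if $x \in B_b$ then $\sigma^t x \in B_{\bar\sigma^t(b)}$ for every $t$, so the labels of the blocks visited by the $\sigma$-orbit of $x$ are exactly the $\bar\sigma$-orbit of $b$. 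Hence, writing $(b_1\,b_2\,\cdots\,b_d)$ for the cycle of $\bar\sigma$ through $b$, the whole $\sigma$-orbit of $x$ lies in $B_{b_1}\cup\cdots\cup B_{b_d}$. As the cycles of $\bar\sigma$ partition $\{1,\dots,\ell\}$, the corresponding unions of blocks partition $X$, and each of the $j$ cycles of $\sigma$ lies in exactly one part.

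Next I would fix $m$. Let $C$ be the cycle of $\sigma$ of length $n_m$, let $(b_1\,\cdots\,b_d)$ be the cycle of $\bar\sigma$ for which $C \subseteq \Omega_m := B_{b_1}\cup\cdots\cup B_{b_d}$, put $e := d$, and define $I_m := \{\, i : \text{the $i$-th cycle of $\sigma$ is contained in } \Omega_m \,\}$; then $m \in I_m$ by construction. For the divisibility claim, suppose the $i$-th cycle is contained in $\Omega_m$ and pick a point $x$ on it, say $x \in B_{b_s}$. Then $\sigma^{n_i}x = x$ forces $\bar\sigma^{n_i}(b_s) = b_s$, and since $\bar\sigma$ acts on $\{b_1,\dots,b_d\}$ as a $d$-cycle this yields $d \mid n_i$, i.e.\ $e \mid n_i$. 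For the sum, note that $\Omega_m$ is the disjoint union of the $d$ blocks $B_{b_1},\dots,B_{b_d}$, so $\#\Omega_m = d k = e k$; on the other hand $\Omega_m$ is one part of the partition above, hence the disjoint union of exactly the $\sigma$-cycles indexed by $I_m$, so $\sum_{i \in I_m} n_i = \#\Omega_m = e k$. This gives both required properties.

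The argument is elementary and I do not anticipate a real obstacle. The only point deserving care is to fix the convention for the action of $W$ on $X$ and to confirm that $\sigma$ really does induce a well-defined permutation of the blocks compatibly with this action, which is precisely the content of Recall~\ref{recall_blocksys}. Alternatively one may carry out every step in coordinates, with $X = \{1,\dots,k\}\times\{1,\dots,\ell\}$, $W$ acting in the standard imprimitive way and $\bar\sigma$ the induced permutation of the second coordinate; this makes all of the above completely explicit at the cost of heavier notation.
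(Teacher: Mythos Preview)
Your argument is correct and follows essentially the same route as the paper: both proofs take $e$ to be the number of blocks meeting the $m$-th orbit (equivalently, the length of the $\bar\sigma$-cycle through the block containing a point of that orbit), define $I_m$ as the set of $\sigma$-cycles contained in the union $\Omega_m$ of those $e$ blocks, and derive the divisibility from transitivity of $\sigma$ on those blocks and the sum from $\#\Omega_m = ek$. Your write-up is a bit more explicit in introducing $\bar\sigma$ and verifying that the cycles partition according to the cycles of $\bar\sigma$, but the underlying idea is identical.
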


\begin{proof}
Denote the blocks that contain the elements from the $m$-th orbit of $\sigma$ by
$B_1,\ldots,B_e$. As $\sigma$ acts transitively on the $m$-th orbit is has to 
act transitively on the blocks $B_1,\ldots,B_e$ as well.

Further, all elements in the blocks $B_1,\ldots,B_e$ are in orbits of 
size divisible by $e$. Let $I_m$ be an indexing set for these orbits. 
We have $m \in I_m$ trivially. 
By counting the points in these blocks we get $\sum_{i \in I_m} n_i = e \cdot k$
and all summands are divisible by $e$.
\end{proof}

\begin{remark}
Lets inspect the particular case of a fixed point in the lemma above closer. 
This results in $e=1$ and encodes that the fixed point is contained in a fixed block.
Thus, the size of the block must be the sum of the length of some orbits of 
$\sigma$ that includes the fixed point.
\end{remark}

\section{Intersection of wreath products}\label{sec:int}

\begin{remark}
As explained above, knowing a subfield of a number field is equivalent 
to knowing a block system of its Galois group. 
The latter means that the Galois group is contained in
a certain wreath product. 

As soon as we know more than one subfield, we know that the Galois group
is contained in the intersection of several wreath products. 
Thus, we need an efficient algorithm to compute the intersection. 
\end{remark}

\begin{algorithm}[Intersection of wreath products]\label{algo_inter} 
Let $\calB_1, \ldots, \calB_s$ be systems of blocks of the 
underlying set $\{1,\ldots,n\}$. 
Denote by $n_1,\ldots,n_s$ the number of blocks of the systems.
\begin{enumerate}
\item
Build a graph with a total number of $s + n_1 + \cdots + n_s + n$ vertices.
\item
Fix a bijection between the vertices and the systems of blocks, the blocks in all
 block systems and the $n$ elements of the underlying set.
\item
Add an edge between a point-vertex and a block vertex if the point is
contained in the block. Add an edge between a block vertex and a block systems
vertex when ever the block is contained in the system of blocks.  
\item \label{color_step}
Color the vertices representing the $s$ block systems with $s$ different colors.
\item
Compute the automorphism group of the colored graph by using {\tt nauty} \cite{KP}.
\item
Compute the action of the automorphism group on the vertices representing the $n$ points.
\item
Return the image of the action as the intersection of the wreath products 
corresponding to the systems of blocks given.
\end{enumerate}
\end{algorithm}

\begin{remarks}
\begin{enumerate}
\item
To illustrate the advantage of the graph approach in contrast to the use of a 
general intersection algorithm, we construct permutation groups of degree 80 
out of the 245 groups of order $17 \cdot 80$ by taking the coset-action 
with respect to a 17-Sylow subgroup.
We compute all principal systems of blocks of all these groups and build 
the corresponding wreath products. Using the standard intersection 
algorithm in \magma, we intersect all wreath products constructed.
The total time for the intersections is 115 seconds. 
Using the graph-theoretic approach we can compute it in 6.5 seconds.
\item
Doing the same in degree 165 with the 181 groups of order $8 \cdot 3 \cdot 5 \cdot 11$ 
by acting on the cosets of a 2-Sylow subgroup 
results in a total intersection time of 1251 seconds. 
The graph theoretic approach takes only 1.6 seconds.
\item
There is no good worst case bound for the run time of {\tt nauty} as an unpredictable
amount of time can be used in the recursive search in dead ends. 
A systematic test with all graphs that this approach constructs 
out of the database of transitive groups %~\cite{Hul,CH} 
up to degree 32 shows, 
that the back-track search of {\tt nauty} never runs into a dead end. 
\item
In case we start Algorithm~\ref{algo_inter} with all principal systems of blocks of 
a permutation group and uses only one color in step \ref{color_step}, we get
the normalizer of the intersection of the wreath products. 
When we try this, the run-time of {\tt nauty} is much larger.
\item
Because of all of this, we believe all the graphs that are constructed 
by Algorithm~\ref{algo_inter} 
to be special. We conjecture that computing the intersection of wreath products 
in this way is a polynomial time algorithm.
\end{enumerate}
\end{remarks}

\section{Galois generating subfields}

\begin{definition}
A set of subfields $(K_i)_{i \in I}$ of $L$ is called Galois-generating,
if all  block systems of the Galois group of $L$ are block systems 
of the intersection of the wreath products corresponding to $(K_i)_{i \in I}$.
\end{definition}

\begin{remark}
As we can express the intersection of subfields purely in terms of block systems,
each intersection-generating set of subfields is Galois-generating as well. 
In many examples we observe that a minimal set of Galois-generating subfields 
is far smaller than a minimal set of intersection-generating subfields. 
We will use this to compute the subfields efficiently. 
\end{remark}

\begin{example}
Let $L$ be a field of degree $n = 2^m$ with elementary abelian Galois group ${\rm C}_2^m$. 
Each pair of roots of $f$ corresponds to a principal partition of block size $2$. 
If we want to find all these principal subfields directly with the LLL-approach, 
we need $n-1$ calls of the LLL-algorithm. 

However, when we know two of these principal subfields, the intersection $K$ of these
two fields results in a relative degree 4 extension $L/K$ with two known subfields. Thus,
the relative Galois group has to be the Kleinian four group. Thus, a third principal 
subfield is for free.

Calling the LLL once more gives us an new principal subfield with block size two. 
Inspecting each pair of the new and a known principal subfield in the same way as 
above gives us another 3 subfields for free. 

This can be continued. We get all principal subfields out of 
$m$ Galois-generating fields. Thus, $m$ instead of $2^m-1$ LLL-calls are sufficient.
\end{example}

\begin{remark}
In the above example we used the structure of the Galois group to explain how new principal
subfields are caused by others. In general the combinatorics can be far more complicated.
That is why we take the detour via the block systems of the intersection of wreath products. 
\end{remark}

The outline of our subfield algorithm is as follows.

\begin{algorithm}[Outline of field search]
To compute the systems of blocks and the subfields of a field $L$ we do the following:
\begin{enumerate}
\item
Factor the defining polynomial of $L$ modulo various primes. 
Derive as much information as possible from the cycles found. 
In the best case this can prove that no subfields exist.
\item
Select a prime $p_s$ for a $p_s$-adic splitting 
field and a prime $p$ for the LLL-computations. 
\item
Loop over the $p$-adic factors of $f$.
\item
Test if the factor may result in a new principal system of blocks.
\item
If a new principal system of blocks can not be excluded, 
compute the corresponding subfield with the LLL-approach and label 
it as proved to be principal.
\item
If a new subfield is found make its system of blocks explicit in 
$p_s$-adic arithmetic and compute a Galois starting group $G$. 
E.g., one can take $G$ as the intersection 
of the wreath products corresponding to all known block systems.
\item
If the group $G$ has more principal systems of blocks
than known, add these to the list of known block systems.
\item
If we can derive from the Galois starting group that we are done, 
terminate prematurely.
\end{enumerate}
If we want all subfields explicitly, we can derive them from 
$G$, as the Galois group and $G$ have the same systems of blocks.
\end{algorithm}

\begin{remarks}
\begin{enumerate}
\item
Note that a system of blocks which is principal with respect to an intermediate group 
might not relate to a principal subfield. The point is, that a principal system of blocks
for one group does not need to be principal with respect to all its subgroups. 
\item
Each group has at most $n-1$ principal block systems. Thus, each subfield found  
causes the computation of at most $n-2$ additional fields that are principal 
for the current group $G$.  
As we have at most $n-1$ subfields found by LLL (and Galois starting group), 
we deal with at most $(n-1)^2$ subfields. 
Thus, this is still polynomial.
\item
In practice many LLL-calls are performed to confirm that a known subfield is in 
fact principal with respect to a certain pair of roots. We can optimize the 
algorithm by detecting this beforehand. If this is possible, we can skip the factor.
Here, we can use the lattice of already known subfields. 
An extra bit of useful information is that some of them are proven to be principal.
%In addition we can use that some subfields are proven to be principal. 
\end{enumerate}
\end{remarks}

\section{Detailed algorithms}

To make the steps in the above outline more explicit, we give the full field search algorithm 
and the sub-algorithms that are called.

\begin{algorithm}[Field search] \ \\
{\bf Input:} {\rm A polynomial $f$ defining the number field $\bbQ[X] / f$.} \\
{\bf Output:} {\rm A list of intersection-generating subfields.}
\begin{enumerate}
\item
Call Algorithm~\ref{prime_inspection} for initialization. 
If this shows that no subfields exist terminate.% the algorithm.
\item
Set up a $p_s$-adic splitting field of $f$ and factor
$f = f_1 \cdots f_m$ $p$-adically with $\deg(f_1) = 1$.
\item
For each factor $f_j, j = 2,\ldots,m$ do the following:
\begin{enumerate}
\item
Call Algorithm~\ref{lattice_test} to check that there 
is space left for an additional principal block system corresponding to
$f_j$. If not continue with the next factor.
\item
Call Algorithm~\ref{principal_subfield} with the factors $f_1,f_j$ 
to get a principal subfield.
%Determine the principal subfield corresponding to $f_j$ by calling.
\item
Compute the system of blocks corresponding to the subfield found as described in 
Remark~\ref{blocks_from_subfield} in $p_s$-adic arithmetic.
Label it as proven to be principal.
\item
If the system of blocks was already known, continue with the next factor.
\item
Add the block system to the list of known block systems.
%If we have only one known system of blocks continue with the next factor.
\item
Compute the group $G$ as the
intersection of the corresponding wreath-products by using Algorithm~\ref{algo_inter}. 
If $\Gal(f) \subseteq A_n$ is known, intersect $G$ with $A_n$.
\item \label{prinzipal_explizit}
If $G$ has more principal block systems than known add these block 
systems and the corresponding subfields 
to the list of known block systems and subfields.
\item
If the order of $G$ equals 
the divisor of the group order known, terminate the loop.
\item
In case the order of $G$ is twice the lower bound of 
the group order, call Algorithm~\ref{final_adjust}.
If an additional system of blocks is found, recompute $G$. 
Terminate the loop.
\end{enumerate}
\item \label{return_fields}
% If all factors are inspected, 
Return the list of subfields 
corresponding to the principal block systems of $G$.
%found as a list of intersection-generating subfields.
\end{enumerate}
\end{algorithm}

\begin{remarks}
\begin{enumerate}
\item
We use the two primes $p$ and $p_s$, to optimize the costs of the 
$p_s$-adic arithmetic and to find the 'best' prime for the LLL-approach.
\item
In the special case $p = p_s$, there is no need to compute the corresponding
subfields in step~\ref{prinzipal_explizit}. 
If in addition, step~\ref{return_fields} would only return the fields already
computed, we would return only Galois-generating subfields.
\end{enumerate}
\end{remarks}

\begin{algorithm}[Prime inspection] \label{prime_inspection} \ \\
{\bf Input:} {\rm A polynomial $f$ defining the number field $\bbQ[X] / f$.} \\
{\bf Output:} {\rm `No subfields' or
\begin{enumerate} 
\item Possible block sizes.
\item The LLL-prime $p$ and the splitting field prime $p_s$.
\item A divisor of the order and the sign of $\Gal(f)$.
\end{enumerate} }
\begin{enumerate}
\item
Enumerate the divisors of the degree of $f$ as potential block sizes.
\item
Factor $f$ modulo several primes not dividing $\disc(f)$. Store the cycle types found.
\item
Rule out all block sizes that are excluded by Lemma~\ref{lemma_block_size} 
applied to the cycle types found.
In the case that all block sizes are ruled out return `No subfields'.
% \item
% Count how many primes lead to at least one linear factor.
\item
Continue these steps, until at least one prime with a linear factor and
a prime $p_s$ with a reasonable splitting field degree are found.
\item
If all  cycle types found correspond to even
Frobenius permutations do the following:
Test the discriminant of $f$ to be a square. If so, note that the  
Galois group is even.
\item
Compute a divisor of the order of the Galois group from the cycle types found
by using the methods described in Section~\ref{divisor_of_order}.
\item
Out of the primes with a linear factor select the one with a minimal number of 
factors of degree less than the largest possible block size. Call this prime~$p$.
\item 
Let $p_s$ be the inspected prime that results in the smallest $p_s$-adic splitting field.
\item
Return $p$ and $p_s$ as the LLL-prime and the splitting field prime we work with.
Further, return the divisor of the group order and all  block sizes that
are not sieved out and the information whether the Galois group is even.
\end{enumerate}
\end{algorithm}

The given field search algorithm is some type of compromise. It stops the LLL-steps if either all 
subfields are known or the Galois group is determined up to a factor of 2. At this point the
idea is to look for the exact Galois group. Thus, all the index two subgroups of the known group
have to be inspected with a quick test. This is done by the next algorithm.

\begin{algorithm}[Final adjustment] \label{final_adjust} \ \\
{\bf Input:} {\rm $G \leqslant \Sym_n$ with $[G : \Gal(f)] \leq 2$, all subfield information known.} \\
{\bf Output:} {\rm Missing Galois generating subfield or 'no Galois-generating subfield missing'.}
\begin{enumerate}
\item
Compute all transitive subgroups of~$G$ of index 2.
\item
Rule out all subgroups with the same principal partitions as~$G$.
\item
For each subfield proven to be principal, rule out the subgroups 
that turn it into a non-principal subfield.
\item
For each subgroup left, pick a new principal system of blocks 
and prove or disprove that it corresponds to an existing subfield
by using Remark~\ref{block_direkt_beweisen}.
(This can succeed for at most one subgroup.)
\item
If a new subfield is found, return it and the corresponding system of blocks.
\item
Return 'no Galois-generating subfield missing'.
\end{enumerate}

\end{algorithm}

\begin{remark} 
Example~\ref{A5_Beispiel} and Example 3 in Table~\ref{test_tab} show the different
possible behaviors of Algorithm~\ref{final_adjust}. 
In the first case we descent to the right index 2 subgroup, in the second case we prove
that the input was already the right starting group. Thus, in the first case we can reduce
from 3 to 2 LLL-calls.  In the second case we can skip 10 fruitless LLL-calls.
\end{remark}

\begin{remark} \label{block_direkt_beweisen}
A method to prove that a conjectural block system is in fact a block system 
is described in~\cite[Algo. 44, 46]{Kl}. The basic idea is to compute the 
corresponding subfield by
using Algorithm~\ref{subfield_from_blocks} and~\ref{subfield_embedding}. 
But, one has to return fail in the case that either the
coefficients of the subfield polynomial are bigger than predicted by the bound
computed from the root bound or the coefficients of the embedding get too big.
For the latter we can use the bound from~\cite[Lemma 18]{HKN}.    
\end{remark}

Given two irreducible $p$-adic factors $f_1,f_2$ of $f$ with $\deg(f_1) = 1$,
we have to compute the largest subfield $K$ of $L$, such that the roots of $f_1$ and
$f_2$ are in the same block with respect to the system of blocks corresponding to $K$.

The basic idea is to use the LLL-approach as described in~\cite{HKN} to construct it and
use the same proof as above to confirm it. We give a detailed description how to merge the
different approaches. 

\begin{algorithm}[Principal subfield] \label{principal_subfield} \ \\
{\bf Input:} {\rm $p$-adic factors $f_1,f_2$ of $f$. $p_s$-adic roots $r_1,\ldots,r_n$ of $f$.} \\
{\bf Output:} {\rm The principal subfield to any root of $f_2$.} 
\begin{enumerate}
\item
Choose an initial $p$-adic precision $\pr_L$.
\item 
Lift the factors $f_1,f_2$ to precision $\pr_L$.
\item
Build up the % corresponding 
lattice as described in 
step 1 of  algorithm  Principal in~\cite{HKN}. 
\item
Apply LLL-with-removals~\cite[Algo. 2]{HN} 
to the lattice with the bound $n^2 \| f \|$.
This results in a $\bbQ$-vector subspace $U$ such that $K \subset U \subset L$.
\item
If $U$ is of dimension 1, return $\bbQ$ as principal subfield. 
\item
We denote by $h_1(\beta),\ldots,h_k(\beta)$ the basis 
of $U$ found.
\item
Set the block identify precision $\pr_B$ to $1$.
\item  \label{bl_id}
Compute the block values  
$V := \{ (h_1(r_i), \ldots, h_k(r_i)) \colon 1\leq i\leq n  \}$
with $p_s$-adic precision $\pr_B$.
\item \label{block_test_1}
If the set $V$ has less than $k$ elements, double the precision $\pr_B$ and redo 
the last step.
\item \label{block_test_2}
If $|V| > k$ or one tuple occurs less than
$n / k$ times go to step 2 with doubled precision $\pr_L$.
\item
Compute the potential block system 
$$
\calB := \left\{ \{ i \in \{1,\ldots,n\} \colon  (h_1(r_i), \ldots, h_k(r_i)) = v \} 
\colon v \in V \right\}\,.
$$
\item
Confirm the block system $\calB$ (see Remark \ref{block_direkt_beweisen}).
If this is successful, we get a defining polynomial of the subfield $g$ and 
a root $h(\beta) \in L$ of it.
%\item
If the confirmation fails, restart at step 2 with doubled precision $\pr_L$.
\item
Check that the roots of $f_1$ and $f_2$ are in the same block of the block system
corresponding to the subfield found.
If this fails, double precision $\pr_L$ and restart at step 2.
\item
Return the  primitive element $h(\beta)$ and its minimal polynomial $g$ as
principal subfield.
\end{enumerate}
\end{algorithm}

\begin{remark}
\begin{enumerate}
\item
Note that the $p$-adic arithmetic in the LLL-computation and the $p_s$-adic 
arithmetic for the block identification and the subfield confirmation are
independent. The interaction is only via $U$ and the subfield polynomial $g$ and its
root $h(\beta)$. None of these data is of $p$-adic nature. 
\item
When we get the subspace $U$ from the LLL-computation we can not assume that this 
is a subfield and we can not assume that it is the one we are targeting. 
We only know, that the subfield we are targeting is a vector subspace of $U$. 
Assuming $U$ to be a subfield, we can identify the corresponding block system. 
In the case that the outcome is coarser than the expected block system 
(step~\ref{block_test_1}) we increase the precision for the block identification.

If the outcome is finer than a block system (step~\ref{block_test_2}), we have 
proved that $U$ is not a subfield. 
\item
In the case that we successfully find a subfield, we are not yet done. If it does not 
relate to a block system which has the roots of $f_1$ and $f_2$ in the first block, 
the field found is larger than the one we target.

In the case that the last step is successful, we can conclude from the block analysis
only that the subfield found is a subfield of the targeted principal field. 
On the other hand $U$ is a vector subspace that contains the targeted principal 
field. As the dimension of $U$ and the subfield degree coincide we are done.
\end{enumerate}
\end{remark}

\begin{example} \label{A5_Beispiel}
Let $K / k$ be a degree 60 Galois extension with group $A_5$.
As the extension is Galois all  $f_i$ are linear. 
The first call to the principal subfield algorithm gives us a degree 12 subfield corresponding to
a block system of block size 5. The three remaining linear factors in the first block
of this block system are skipped.

The second call to the principal subfield algorithm gives us a second degree 12 subfield that is 
a conjugate of the first one. Now, we compute the intersection of the corresponding wreath products.
This is a group of order 240 isomorphic to an index 2 subgroup of $S_5 \times C_4$. 
It has 7 systems of blocks. 
As this group is not a subgroup of $A_{60}$, we can descent to the 
intersection $U$ with $A_{60}$ of order 120. 
$U$ is isomorphic to $A_{5} \times C_2$ and has 13 systems of blocks.

At this point, we determine all index two subgroups. As $A_{5}$ is simple, we find only one. 
It has 57 systems of blocks. 
After we confirmed one of the additional subfields by using Remark~\ref{block_direkt_beweisen} 
we have proved that the extension is regular with
Galois group $A_5$. Thus, we are done  with 2 LLL computations instead of 59.
\end{example}

\section{The lattice test}
Let us assume the following situation. Let $L,f$ be as above. 
Further, we have the factorization of $f$ modulo $p$ as $f_1 \cdots f_m$.
We assume that $f_1$ is linear. We denote the root of $f_1$ by $r_1$.
In addition to this, we know already the subfields $K_1,\ldots,K_\ell$ of $K$. 
We denote the block containing $r_1$ of the block systems corresponding to $K_1,\ldots,K_\ell$
by $\Delta_1, \ldots, \Delta_\ell$. As the $\Delta_i$ are Frobenius invariant, we can 
view each $\Delta_i$ as a subset of the above modulo $p$ factorization of $f$.
We want to check, if there is space for an additional principal subfield corresponding to the pair
$r_1, r_i$. Here, $r_i$ is a root of the local factor $f_j$.   
For this we use the algorithm of this section. We give the proof of correctness in 
Remark~\ref{remark_lat_test} and Lemma~\ref{pq_lemma}. 
Examples %illustrating the efficiency
are given in Example~\ref{examples_lat_test}.

\begin{algorithm}[Lattice test] \label{lattice_test} \ \\
{\bf Input:} {\rm 
\begin{enumerate}
\item
The $p$-adic factorization of $f = f_1 \cdots f_m$.
\item
The index $j \in \{2,\ldots,m\}$ of the factor $f_j$ to be tested.
\item
A list of possible block sizes.
\item 
A list $\Delta_1,\ldots,\Delta_\ell$ of subsets of \{1,\ldots,m\} encoding the $p$-adic 
factors in the first blocks of known block systems.
\item
Labels indicating which $\Delta_i$ are proven to be principal.
\end{enumerate} 
{\bf Output:} {\rm `do factor' or `skip factor'} }
\begin{enumerate}
\item
Find the smallest first block $\Delta$ in $\Delta_1,\ldots, \Delta_\ell$ that contains $j$.
\item
In the case that no such first block is found set $\Delta := \{1,\ldots, m\}$.
(We are looking for a refinement of $\Delta$.)
\item 
Set $n_0 := \sum_{i \in \Delta} \deg f_i$. This is the block size of~$\Delta$. 
\item
Compute the list $N$ of all known refinements of $\Delta$. These are all the $\Delta_i$ with
$\Delta_i \subsetneq \Delta$.
\item
In the case that $n_0 = 4$ and $\Delta$ is known to be a principal block and we 
have one refinement in $N$ return `skip factor'.
\item
In the case that $n_0 = 8$ and $\Delta$ is known to be principal and $N$ contains 
a block of size 2 and a principal block of size 4, return `skip factor';
\item
From the list of potential block sizes extract the list $S$ of all entries that 
are proper divisors of $n_0$.
\item
Delete from $S$ all block sizes that can not be written as a sum of $1 + \deg f_j$ and 
the degrees of other factors in $\{ f_i \colon i \in \Delta \setminus \{1,j\} \}$.
\item
If a possible block size $d \in S$ satisfies 
$(d - \deg f_j) \cdot \frac{n_0}{k} < d$ 
with $k$ the block size of a known refinement in $N$, delete $d$ from $S$. 
\item
In the case that $n_0 = k \cdot q$ with a prime number $q > k$ and we know a 
refinement of $\Delta$ with block size $q$, delete $q$ from $S$.
\item
In the case that $n_0$ is the square of an odd prime number, do the following:
\begin{enumerate}
\item
In the case that there are two known refinements in $N$ and 
$\Delta$ is known to be principal return `skip factor'. 
\item
In the case that there are two known refinements in $N$ and
the factors $\{ f_i : i \in \Delta \setminus \{1\} \}$ 
are not all of the same degree return `skip factor'. 
\end{enumerate}
\item
In the case that $n_0 = p \cdot q$ with two 
prime numbers $q > p$ do the following:
\begin{enumerate}
\item
In the case that $q \not\equiv 1 \bmod p$ and $N$ contains a refinement 
with block size $p$, delete $p$ from $S$.
\item
In the case  $p \cdot q \not \in \{21, 55\}$ and a refinement with block size
$p$ is known do the following:
If $\Delta$ is known to be principal or $\Delta$ contains a factor of 
degree $> 1$, delete $p$ from $S$.
\item
In the case that $p \cdot q \in \{21, 55\}$ and we know two refinements in $N$ and
$\Delta$ is known to be principal, return `skip factor'.
\end{enumerate}
\item
In the case that $S$ is empty return `skip factor', otherwise return `do factor'.
\end{enumerate}
\end{algorithm}

\begin{remark} \label{remark_lat_test}
The above test uses the following facts about block systems.
\begin{enumerate}
\item
If a block system refines another block system, the block size 
of the first one divides the block size of the second one.
\item
Given two block systems, the non-empty intersections 
$B_i \cap B'_j$ are all of the same size.
\item
In degree $4$ the only permutation group with more than 
one (non-trivial) system of blocks is the Kleinian four group.
\item
A degree $8$ permutation group with principal blocks of size 2, 4, 8 has
no other blocks.
\item
In degree $p^2$ ($p$ is an odd prime) a transitive group has either at most 
two systems of blocks or it is contained in $(C_p \times C_p) \rtimes C_{p-1}$. 
In the latter case it has $p+1$ systems of blocks and $\{1,\ldots,p^2\}$
is not principal. Further, all cycles with a fixed point are of the type 
$(1,d,\ldots,d)$ with a divisor $d$ of $p-1$.
\item
In degree $k \cdot q$ with $q > k$ ($q$ a prime number) 
there is at most one system of blocks with block size~$q$.
\item
In degree $p \cdot q$ with $q > p$ ($p$,$q$ both prime numbers) and $q \not\equiv 1 \bmod~ p$ 
there is at most one system of blocks with block size~$p$.
\end{enumerate}
Some of the above statements are well known. The $p^2$-case is worked out in~\cite{DW}. As
we do not know a good reference for the $(p \cdot q)$-case we give the proof below.
\end{remark}

\begin{lemma}  \label{pq_lemma}
Let $p,q$ with $p < q$ be prime numbers and $G \subset \Sym_{pq}$ be transitive
with more than one block system of block size~$p$.
Then $q \equiv 1\, (\bmod\, p)$. 

If $G$ is solvable then $G$ is the regular representation of 
$\bbZ / q \bbZ \rtimes\bbZ / p \bbZ$ with exactly $q$ block systems of 
block size $p$. The trivial block system with only one block is not principal.
 
If $G$ is not solvable we have $p \mid q - 1 \mid p (p-1)$. 
Assuming the classification of finite simple groups
we have one of the following two cases:
\begin{enumerate}
\item
$p =  3$, $q = 7$, $\#G = 168$ and $G \cong {\rm GL}_3(\bbF_2)\cong {\rm PSL}_2(\bbF_7)$.
\item
$p = 5$, $q = 11$, $\#G = 660$ and $G \cong {\rm PSL}_2(\bbF_{11})$. 
\end{enumerate}
Both cases result in exactly two block systems of block size~$p$ 
and none of block size~$q$.
\end{lemma}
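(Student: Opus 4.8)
The plan is to analyze the permutation group $G$ via the structure it induces on the set of block systems of size $p$. First I would set up the basic combinatorics: if $\calB$ and $\calB'$ are two distinct systems of blocks of size $p$, then by Remark~\ref{remark_lat_test}(2) the nonempty intersections $B_i \cap B'_j$ all have a common size $d$, and since $\calB \neq \calB'$ we must have $d = 1$; hence each block of $\calB$ meets each block of $\calB'$ in exactly one point. This means the two partitions are ``orthogonal'', so $\{1,\ldots,pq\}$ can be identified with a $q \times p$ grid, and since there are $q$ blocks of size $p$ in $\calB$ and $p$ blocks of size $p$ in $\calB'$, we must have $q = q$ and $p$ blocks in $\calB'$, forcing the block count of a size-$p$ system to be $q$ and of a size-$q$ system to be $p$; in particular $G$ has a block system of size $q$ as well (the rows), and acts on those $p$ blocks. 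Let $N$ be the kernel of the action of $G$ on this size-$q$ system; then $G/N \hookrightarrow \Sym_p$ is transitive of degree $p$, and $N$ acts on each of the $p$ blocks of size $q$.

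Next I would pin down $N$. Restricting $N$ to a single block of size $q$: since a block system of size $p$ restricted to that block gives a partition into singletons within each row — more carefully, the size-$p$ block systems, intersected with a fixed row (block of size $q$), are trivial, so they don't directly constrain $N$ on one row. Instead I would count: a size-$p$ block system that is not equal to the ``column'' system $\calB'$ must, when restricted to the grid, be a ``generalized diagonal'' — a set of $q$ blocks each picking one point from each row such that the choice is $G$-equivariant. The existence of two such (the original $\calB$ and at least one more, by hypothesis) gives a transitive cyclic-type action linking the rows; standard Burnside/Schur-type arguments for transitive groups of prime degree $q$ (a transitive group of prime degree $q$ is either solvable with a normal Sylow-$q$ acting regularly, or is $2$-transitive) then force $q \mid q-1$ behavior. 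I would invoke the theorem on transitive groups of prime degree: the socle of the action on the $q$ rows is either $C_q$ (solvable case) or a $2$-transitive simple group, and count the block systems of size $p$ as orbits of a point-stabilizer structure to get the constraint $q \equiv 1 \pmod p$, using that the number of distinct ``diagonal'' systems equals the number of $G$-invariant matchings which is governed by the index of a relevant stabilizer.

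Then the case split. In the solvable case, a transitive solvable group of degree $pq$ with the above structure has a normal Sylow-$q$ subgroup acting regularly on the $q$ rows, so $G$ is a subgroup of $(\bbZ/q) \rtimes \Aut$-type of order $pq$, i.e.\ the regular representation of $\bbZ/q \rtimes \bbZ/p$; here one checks directly there are exactly $q$ block systems of size $p$ (one for each coset-type diagonal) and the one-block partition is generated by a pair of roots in the same small block, hence not principal — or rather, I would verify via Recall that principality means generated by a single pair $(1,j)$, and the trivial partition is the transitive closure of several proper ones, so it is not principal. In the non-solvable case, the action on the $q$ rows is $2$-transitive with simple socle of degree $q$; combined with the grid structure and the action on $p$ columns, one gets $p \mid q-1$ and $q-1 \mid p(p-1)$ (the latter from the order of a point stabilizer in the degree-$p$ quotient times the cyclic part). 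Running through the $2$-transitive simple groups of prime degree $q$ (classification of finite simple groups), the numerical constraints $p < q$, $p \mid q-1 \mid p(p-1)$ leave only $(p,q) = (3,7)$ giving $\mathrm{PSL}_2(\bbF_7) \cong \mathrm{GL}_3(\bbF_2)$ of order $168$, and $(p,q) = (5,11)$ giving $\mathrm{PSL}_2(\bbF_{11})$ of order $660$; in each case a direct check (e.g.\ in a computer algebra system, or via the known maximal subgroup lattice) shows exactly two block systems of size $p$ and none of size $q$.

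The main obstacle I expect is making the transitive-group-of-prime-degree input precise enough to extract $q \equiv 1 \pmod p$ cleanly and to reduce the non-solvable case to a finite, checkable list: one needs both the classification of $2$-transitive (equivalently, by Burnside, transitive-of-prime-degree-and-not-solvable) simple groups and a careful bookkeeping of how the degree-$p$ action and the cyclic stabilizer combine to bound $q-1$. The grid/orthogonality setup and the solvable case are routine; the delicate part is ensuring the numerical sieve $p \mid q-1 \mid p(p-1)$ with $p < q$ genuinely isolates exactly the two exceptional groups and that the count of block systems in those two groups is as claimed, which I would confirm by an explicit computation.
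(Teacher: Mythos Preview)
Your opening combinatorial step is wrong, and the rest of the argument is built on it. Two distinct block systems of size~$p$ each have $q$ blocks; from $d=1$ you can only conclude that a given block $B \in \calB$ meets \emph{exactly $p$} of the $q$ blocks of $\calB'$ (since $|B|=p$), not all of them. So there is no $q\times p$ grid, and in particular you cannot manufacture a block system of block size~$q$ this way. Indeed the statement you are trying to prove already tells you this fails: in the two nonsolvable cases there is \emph{no} block system of size~$q$ at all. Everything downstream --- the kernel $N$ of the action on the ``rows'', the degree-$p$ quotient, the diagonal-counting --- therefore has no foundation.

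The paper's route avoids this by never positing a size-$q$ block system. Instead it looks at the two degree-$q$ actions $\varphi_1,\varphi_2$ of $G$ on the blocks of $\calB_1,\calB_2$. The crucial (and not entirely obvious) step is that each $\varphi_i$ is \emph{faithful}: since $G\le \Sym_p\wr\Sym_q$ one has $q\parallel\#G$, and any nontrivial normal subgroup of a transitive group of prime degree~$q$ contains all $q$-Sylows, so $\ker\varphi_i$ must be trivial. Hence $G$ itself is (abstractly) a transitive permutation group of prime degree~$q$. The solvable case then follows from Galois's description $G\cong(\bbZ/q)\rtimes(\bbZ/r)$, $r\mid q-1$, together with an argument that $r>p$ forces a unique size-$p$ block system. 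In the nonsolvable case Burnside gives 2-transitivity of $\varphi_1(G)$; the stabilizer $U$ of a block of $\calB_1$ then has one orbit of length~$p$ and all others of length divisible by $q-1$, and that length-$p$ orbit meets $p$ blocks of $\calB_2$, producing a $U$-invariant set of size $p^2$. This yields $(q-1)\mid p(p-1)$, which together with $p\mid q-1$ is the numerical sieve you were aiming for. Only then does one run through the CFSG list of prime-degree groups. Your sketch has the right endgame but is missing both the faithfulness argument and the orbit-length mechanism that produces the divisibility $q-1\mid p(p-1)$.
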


\begin{proof}
Let $\calB_1, \calB_2$ be two block systems with block size $p$. 
Thus, we have $G \leq S_p \wr S_q$ and $q \| \# G$.
We denote the corresponding
$G$-action on the blocks by $\varphi_1$ and $\varphi_2$. The intransitive representation of $G$ given
by $\varphi_1 \oplus \varphi_2$ is faithful.  Thus, $G$ is a sub-direct product in 
$\varphi_1(G) \times \varphi_2(G)$.  

As $\#G$, $\#\varphi_1(G)$ and $\#\varphi_2(G)$ are exactly once divisible by $q$ and each
nontrivial normal subgroup of a transitive permutation group of prime degree $q$ contains 
all the $q$-Sylow-subgroups \cite[Kap.\ II Satz 1.5]{Hup} we can conclude that 
$\varphi_1$ and $\varphi_2$ are faithful permutation representations of $G$.

In the case that $G \cong \varphi_1(G)$ is solvable we can use the result of Galois 
$\varphi_1(G) \cong (\bbZ / q \bbZ) \rtimes (\bbZ / r \bbZ)$~\cite[Kap. 2, Satz 3.6]{Hup}. 
Here,  $r$ is a divisor of $q - 1$. As $G$ acts transitive on $p \cdot q$ 
points we conclude $p \mid r \mid q-1$. In the case that $r = p$ the group $G$ 
has exactly $q$ Sylow $p$-subgroups and one Sylow $q$-subgroup.
Thus, we have $q$ block systems with block size $p$ and one with block size $q$. 
If $p < r$ the point stabilizer in $G$ is a cyclic group of order $\frac{r}{p}$. 
The unique index $q$ subgroup that contains it is its normalizer. 
Thus, $G$ has only one block system of block size $p$.

Now we assume that $G \cong \varphi_1(G)$ is not solvable. 
By a theorem of Burnside~\cite[Kap. 5, Satz 21.3]{Hup} $\varphi_1(G)$   
is 2-transitive. Let $U \subset G$ be the stabilizer of one block of $\calB_1$. 
The action of $U$ on $\calB_1$ has one orbit of size $1$ and one of size $q-1$.
Thus, the action of $U$ on the $pq$ points has one orbit of length $p$
and the lengths of the other orbits are multiples of $q-1$. 

The orbit of length $p$ consists of the points in the block stabilized. 
It hits $p$ blocks of $\calB_2$. 
These blocks form a $U$-invariant set of order $p^2$. Thus, $p^2$ is the sum of $p$ and
some multiples of $q-1$. This shows $(q-1) \mid (p^2 - p) = p (p-1)$. As $q$ is bigger than $p$
we can conclude $p \mid q-1 \mid p (p-1)$. 

As $G \cong \varphi_1(G)$ is of prime degree $q$ and not solvable the classification 
of finite simple groups implies that $\varphi_1(G)$ is one of:~\cite[Sec. 3.5]{DM}
\begin{itemize}
\item[1)]
The symmetric or the alternating group of degree $q$.
\item[2)]
${\rm PSL}_2(\bbF_{11})$ acting on $q = 11$ points.
\item[3)]
The Mathieu group $M_{q}$ with $q = 11$ or $q = 23$.
\item[4)]
A projective linear group $G$ with ${\rm PSL}_d(\bbF_\ell) \subset G \subset {\rm P \Gamma L}_d(\bbF_\ell)$
of degree $q = \frac{\ell^d-1}{\ell-1}$. Here, $d$ is a prime.
\end{itemize}
Further, we have proved that the one-point stabilizer of $\varphi_1(G)$ has an index $p$ subgroup with
$p \mid q-1 \mid p (p-1)$. 
We get
$$
q \in \{q \in \bbP \mid \exists p \in \bbP \colon p \mid q-1 \mid p (p-1)\}
= \{ 3, 7, 11, 23, 43, 47, 53, \ldots \}\, .
$$
As $S_3$ is solvable, $q$ is at least 7.
Now we check, which of the above options is compatible with this.
\begin{itemize}
\item[1)]
  The one-point stabilizer of the symmetric and the alternating groups of degree $q \geq 7$ is
  the symmetric or the alternating group of degree $q-1$. The subgroups of smallest index 
have index 2 or $q-1$. They are given by the intersection with the alternating group or 
as the stabilizer of a second point. Further, if $q-1 = 6$ we get another index 6 subgroup. 
This excludes the option of a suitable index $p$ subgroup.
\item[2)]
The one-point stabilizer of $\varphi_1(G) = {\rm PSL}_2(\bbF_{11})$ has exactly one maximal subgroup of 
prime degree. It results in one of the exceptions listed in the claim.
\item[3)]
The one-point stabilizers of the Mathieu groups have only one subgroup of prime index.
This is an index 2 subgroup in the one-point stabilizer of $M_{11}$.
This excludes the option.
\item[4)]
Here we have $p \mid \frac{\ell^d-1}{\ell-1} - 1 \mid p (p-1)$. In the special case 
of $d = 2$ we get $p \mid \ell \mid p (p-1)$. As $p$ is a prime number and $\ell$ a 
prime power we conclude $p = \ell$ and $q = p + 1$.
This forces $p = 2$, $q = 3$ in contradiction to $q \geq 7$.

Otherwise $d = 2 k + 1$ is an odd prime number. This results in 
\begin{eqnarray*}
p \, \bigg| \, q-1 &=& \ell (\ell^{2k-1} + \ell^{2k-2} + \cdots + \ell + 1)  \\
  &=& \ell (\ell^k + 1) (\ell^{k-1} + \ell^{k-2} + \cdots + \ell + 1)\, \bigg| \, (p-1) p.
\end{eqnarray*}
As $q-1$ is larger than $\ell^{2k} + \ell^{2k-1}$ we get the estimate 
$$
\ell^{2k} + \ell^{2k-1} < q-1 \leq p(p-1) < p^2.
$$
This implies $p \geq \ell^k + 1$. Here, equality is only possible in the case of $k = 1$. 
As the largest factor of $q-1$ is $\ell^k + 1$, we can conclude $p = \ell^k + 1$, $k = 1$, and $d = 3$. 

Using once more that $\ell$ is a prime power we get that $p$ is a 
Fermat prime and $\ell$ has the shape $2^{2^e}$. 
As $q = \ell^2 + \ell + 1$ we can conclude $3 | q$ as long as $2^e$ is even. 
Thus, the last remaining possibility is $e = 0,$ $\ell = 2,$ $p = 3,$ $q = 7$, and $d = 3$. 
Is results in $\varphi_1(G) = {\rm GL}_3(\bbF_2)$ as a potential group. This is the other exception 
in the claim. \qedhere
\end{itemize}
\end{proof}

\begin{examples} \label{examples_lat_test}
When applying the lattice test to a root $r_i$ of a local factor $f_j$ of the field extension $K / \bbQ$
the algorithm will first determine the largest known subfield $k$ such that $r_1$ and $r_i$ 
are in the same block. Then our criteria will be applied to the relative extension $K / k$. 
To keep the examples simple, we will describe only the relative situation:
\begin{itemize}
\item[1)] Assume that $K/ k$ is a cyclic extension of degree 15. 
As the Galois group is a regular permutation group, we get 15 linear factors $f_j$. We need two successful
calls to the principal subfield algorithm to get the subfields of degree 3 and 5. After this, the 
$p \cdot q$-case of the lattice test applies and stops the search for further principal subfields. 
Note, that the intersection of the wreath products will result in the group $S_3 \times S_5$. Using 
the discriminant we can descent to an index 2 subgroup. 
\item[2)] Let $K/ k$ be a cyclic extension of degree 10. 
As above, all $f_j$ are linear. Here we have to find the degree 2 and the degree 5 subfield with one 
successful call to the principal subfield algorithm for each. Further, we have to confirm that $k$ is principal. 
Otherwise the Galois group could be the regular representation of a dihedral group with 4 other subfields. 
Thus, in the best case 3 LLL-calls suffice.
\item[3)] Assume that $K / k$ is a Galois extension of degree 10 with a dihedral Galois group. Here 
we have to determine two block systems with block size 2. Then the intersection of the wreath products 
descents to a group of order 10 and gives us 4 more subfields. Knowing all these subfields the lattice
test will stop the search. Aside from this, the lower bound of the group order could be used. 
\item[4)] Let $K / k$ be a degree 21 extension with Galois group $21T4 \cong C_7 \rtimes C_6$. 
Here, we can find a prime with three linear and nine quadratic local factors. 
A first call to the principal subfield algorithm with a linear local factor will result in a
degree 7 subfield. Now, the third linear factor can be skipped as it is part of a known block of size 3.

The principal subfield algorithm applied to 3 of the 9 quadratic factors will 
give us the last missing block system of block size 7 corresponding to a cubic subfield.   
Thus, one successful call to the principal subfield algorithm suffices. 
At this point the $p \cdot q$ test applies. It concludes that the only 
possibility of even more subfields corresponds to the regular Galois 
group $C_7 \rtimes C_3$. As we started with an irregular cycle, this
option is excluded. Thus, we have all subfields.
\item[5)] 
Assume $K / k$ to be a degree 8 extension with Galois group $8T17$ of order 32. 
We can choose a prime such that four of the $f_i$ are linear and the fifth is quartic. 

As no block system with block size bigger than 4 is possible, we can skip the quartic factor
and label $k$ as principal. Applying the principal subfield algorithm twice to linear factors 
gives us a quadratic and a quartic principal subfield. Now, we can skip the last linear factor. 
This follows from the statement on degree 8 permutation groups as we have principal subfields of
degree 1,2,4. 

\end{itemize}
\end{examples}

\section{Applications to Galois group computation}\label{sec_applications}

The computation of the Galois group of the splitting field of a polynomial
$f$ of degree $n$ with rational coefficient is usually done by a method 
introduced by Stauduhar~\cite{St}. See~\cite{Ge,FK} for further  
developments. It constructs a descending chain of 
subgroups starting at a sufficiently large group (e.g. $\Sym_n$) down to 
the Galois group.

To perform well in practice, we have to start with a group as small as 
possible. Subfields are used at this point, as each of them relates
to a wreath product that contains the Galois group. 
Initially, the Galois group of each subfield was determined, to get an
even smaller wreath product. Finally the intersection of all
 wreath products was determined~\cite[Algorithmus 5.3]{Ge}.

Doing it this way results in two disadvantages. 
Subfields that are contained in several maximal subfields were 
treated multiple times in the recursive approach.   
Second, the intersection of several wreath products can be a very small group. 
Thus, the strategy should be rearranged. We present it as an algorithm. We remark that
we perform step~2 using the graph theoretic methods from Section \ref{sec:int}.
We have the hope that this step is in polynomial time, but we can not prove this.

\begin{algorithm}[Galois starting group]\label{start} \ \\
{\bf Input: }{\rm A field $L$.} \\
{\bf Output: }{\rm An overgroup of the Galois group of $L$.}
\begin{enumerate}
\item
Determine Galois-generating subfields of $L$. 
\item
Set $G_0$ to the 
%Compute the 
intersection of the wreath products corresponding to these fields.
%Call this group $G_0$.
\item
Determine the  projection of $G_0$ to a starting group of the Galois group of
$\bbQ(\{ \sqrt{\disc(K_i)} : i \in I \}) / \bbQ$.
Here, $\{ K_i : i \in I \}$ is the set of all subfields of $L$.
\item 
Determine the Galois group of the multi-quadratic extension above and 
replace $G_0$ by the pre-image of this group.
\item
Find the subfield $K$ of smallest degree such that the projection of $G_0$ 
to a Galois starting group of $K$ does not result in a group of order equal
to the lower bound of the Galois group of $K$ computed by cycle type inspection.
\item 
Determine the projection $\pi_K$ that maps $G_0$ to its action on the block system
corresponding to $K$.
\item
Use $\pi_K(G_0)$ as a starting group to 
determine the group $G_K$ of $K$ by the Stauduhar method.
\item
Replace $G_0$ by $\pi_K^{-1}(G_K)$.
\item
Redo the last three steps for all other subfields $K$ with degrees in 
ascending order. 
\item \label{rel_disc}
If $L / \bbQ$ has a unique maximal subfield $K$ and $[L:K] > 2$ 
compute the field $K_\Delta := K[\disc(L / K)]$ and the homomorphism $\pi$
that maps $G_0$ to a starting group for $K_\Delta$.
\item \label{rel_des}
In the case that $K_\Delta$ was constructed, determine the Galois group $G_\Delta$ 
by the Stauduhar method and replace $G_0$ by $\pi^{-1}(G_\Delta)$.
\item
Return $G_0$ as a starting group for the Galois group computation of $L$.
\end{enumerate}
\end{algorithm}

\begin{remark}
In step \ref{rel_disc} we use an auxiliary field that is not part of
the subfield lattice. This is only done in the case that the relative 
degree is bigger than 2 and the maximal subfield is unique. The reason for 
these restrictions is that otherwise this subfield would coincide with $L$ 
or the intersection of the wreath products together with the subfield 
inspection results already in a small starting group. 
Thus, in the latter case there is no reason to
expect a large index descent in step~\ref{rel_des}.

A systematic test with the database of transitive groups brings the
degree 42 group with number 5798 to light. It relates to a degree 42
field with maximal subfields of degree 14 and 21. The intersection of the 
two maximal subfields is of degree 7. The subfield groups are $\Sym_2 \wr \Sym_7$
and $\Sym_3 \wr \Sym_7$. 
Thus, in this case the computed starting group is 
$D_{2 \cdot 6} \wr \Sym_7$.
Here, step~\ref{rel_disc} would construct
a degree 28 field with starting group $(\Sym_2 \times \Sym_2) \wr \Sym_7$. 
Applying step~\ref{rel_des} would result in an descent to a maximal 
subgroup of index 64.

\end{remark}

\begin{example}
Lets illustrate this strategy by applying it to $f_{18} = x^{18} + 9x^9 + 27$.
The subfield algorithm determines the subfields given by 
$f_2 := x^2 + 9 x + 27, f_3 := x^3 - 12 x^2 + 39 x - 37$, and $f_6 := x^6 + 9 x^3 + 27$. 
The sextic field is the composition of the smaller ones. 
The subfields correspond to the systems of blocks 
\begin{eqnarray*}
&\left\{ \{ 1, 2, 3, 4, 5, 6, 7, 8, 9 \}, 
         \{ 10, 11, 12, 13, 14, 15, 16, 17, 18 \} \right\}, \\
&\left\{ \{ 1, 2, 3, 13, 14, 15 \}, \{ 4, 5, 6, 16, 17, 18 \}, 
         \{ 7, 8, 9, 10, 11, 12 \} \right\}, \\
&\left\{ \{ 1, 2, 3 \}, \{ 4, 5, 6 \}, \{ 7, 8, 9 \}, 
         \{ 10, 11, 12 \}, \{ 13, 14, 15 \}, \{ 16, 17, 18 \} \right\}  .
\end{eqnarray*}
The intersection of the corresponding wreath products results in the group 
18T903 of order 559872. This projects to a starting group for the sextic 
subfield of order $12$. As the discriminant of $f_3$ is a square 
we can pass to an index 2 subgroup. This determines the groups of all 
subfields.

As $f_2$ and $f_{18}$ have the same discriminant up to squares, we can do another
descent to an index 2 subgroup. This gives us the next smaller
starting group 18T806.

At this point the algorithm determines that the Galois group  of $\bbQ[X] / f_{18}$ viewed as a degree
3 extension of $\bbQ[X] / f_{6}$ is even 
as the resolvent $x^{12} + 531441 x^6 + 282429536481$
is reducible. Thus, we return the group
18T453 of order 4374 as starting group for the Galois group computation.

In the case that we want to finish the computation of the Galois group of $f_{18}$,
we descent via 18T323, 18T160 and 18T82 to 18T16. All descents are of index 3.
\end{example}

\section{Algorithm selection}

At a first glance it seems that the non-polynomial time algorithm~\cite{Kl} 
is now outdated. However, this is not the case. The point is, that there are 
many cases, where the old algorithm had to do only very few steps, each of 
them being a lot simpler than an LLL-reduction.

This can be explained by interpreting the old algorithm 
as part of a Galois group computation with the Stauduhar method.
This method determines the Galois group by constructing a descending chain
of overgroups of the Galois group till the Galois group is reached.

More precisely, determining a subfield of degree $\ell$ is equivalent to 
determining a wreath product $\Sym_k \wr \Sym_\ell \subset \Sym_n$
that contains the Galois group. 
The number of subgroups conjugate to $\Sym_k \wr \Sym_\ell$ is
$\binom{n-1}{k-1} \binom{n-k-1}{k-1}  \cdots \binom{2k -1}{k-1} \binom{k -1}{k-1}$.

Testing all these conjugates is too costly. 
The starting point of~\cite{Kl} is to work with $p$-adic root approximations in an
unramified $p$-adic extension. Then, the local Galois group is generated by the 
Frobenius element, which is known to be an element of the Galois group. 
In many cases only very few conjugates of $\Sym_k \wr \Sym_\ell \subset \Sym_n$
contain a given element $\Frob \in \Sym_n$.
In other words, there are only very few potential block systems that are compatible 
with the Frobenius action on the roots.

In the more general setting of Galois group computations 
the conjugate subgroups of 
a subgroup that contain a prescribed Frobenius element are 
determined by using {\em short cosets} which are formed by a the 
coset representatives $\sigma$ of $G / U$ such that $\Frob \in U^\sigma$~\cite{Ge}.  
There are several ways to determine short cosets efficiently~\cite{E1}.

The number of short cosets can be derived from the cycle type of the
Frobenius permutation. In Table~\ref{sc_tab} we give a few examples for a regular cycle.

\begin{table}[h]
\begin{center}
\begin{tabular}{c|c|c}

Cycle type     & Possible cycle types  & Number of        \\
of Frobenius   & on blocks             & short cosets     \\
\hline  
$(n)$                                    &          $(\ell)$ &   1 \\
$\left(\frac{n}{2}, \frac{n}{2}\right)$             & 
$(\ell), \left(\frac{\ell}{2}, \frac{\ell}{2}\right)$ &                   $\ell, 1$ \\[0.5mm]    
$\left(\frac{n}{3}, \frac{n}{3}, \frac{n}{3}\right)$&
$(\ell), \left(\frac{\ell}{3}, \frac{2 \ell}{3}\right),
\left(\frac{\ell}{3}, \frac{\ell}{3}, \frac{\ell}{3}\right)$&    $\ell^2, 2\ell, 1$ \\[0.5mm]
$\left(\frac{n}{4}, \frac{n}{4}, \frac{n}{4}, \frac{n}{4}\right)$ &
$(\ell), 
\left(\frac{\ell}{2}, \frac{\ell}{2} \right), 
\left(\frac{\ell}{4}, \frac{3 \ell}{4} \right),$ 
& 
$\ell^3, \frac{3}{4} \ell^2, \frac{9}{4} \ell^2, $ \\[0.5MM]
&
$
\left(\frac{\ell}{4},\frac{\ell}{4},\frac{\ell}{2}\right),
\left(\frac{\ell}{4},\frac{\ell}{4},\frac{\ell}{4},\frac{\ell}{4}\right)$ 
& $3 \ell, 1$ \\[0.5mm]
$(1,\ldots,1)$ & $(1,\ldots,1)$ & 
$
\frac{n!}{n (n-k)(n-2k) \cdots (2k) k}
$
\end{tabular}
\smallskip

\caption{Number of short cosets for $\Sym_k \wr \Sym_\ell  \subset \Sym_{k \ell}$ \label{sc_tab}}
\end{center}
\end{table}

The table shows that the algorithm described in~\cite{Kl} should be used in 
the case that a Frobenius element with a small number of orbits is found.

\section{Practical performance test}
All test are carried out by using \magma~2.23 on one core of an 
Intel i5 %-4690 processor 
running at 3.5 GHz.
For testing the method we use the polynomials listed in: 

\centerline{\tt http://www.math.fsu.edu/$\sim$hoeij/subfields/} 

%This list covers a range of cases.
It includes the runtime for an Intel core 2 running at 2.33 GHz.
% They all have in common that the number of $p$-adic factors is not too small. 
The  test results are listed in Table~\ref{test_tab}. The column total time gives 
the time to construct the entire subfield lattice. In the examples 
12, 14, 15, and 20 a significant amount  of time (5.7/17.6/29.9/318.5 sec.) 
is used to construct the subfield lattice out of the generators found. 

\begin{table}[h]

\begin{center}
{\scriptsize
\begin{tabular}{r|c|c|c|c|c|c|c|c|c}
 Nr & Degree & Divisor     &  Galois group   &  $\#G_0$ & LLL          & LLL       & total   & time for   & time given in\\ 
    &        & of $\#\Gal$ &                 &          &  calls       & time      & time    & \cite{Kl} & \cite{HKN}\\
\hline
  1 &    36  &    108      & ${\rm C}_3 \times \Sym_3 \times \Sym_3$
                                             &   108    &       10      &   1.63   &   2.19  &   0.43 & 30.69  \\
  2 &    75  &    300      & ${\rm C}_5^2 \rtimes {\rm C}_{12} $
                                             &   300    &        8      &  84.66   &  87.03  &  26.39 & 453.24 \\
  3 &    48  &    192      & ${\rm C}_4^2 \rtimes {\rm D}_{2 \cdot 6}$  
                                             &   384    &        7      &   4.72   &   5.98  &   6.16 & 101.24 \\
  4 &    56  &    336      &${\rm Aut}({\rm GL}_3(\bbF_2))$
                                             &   336    &        3      &   5.30   &   5.98  &  1415.99 & 192.17 \\
  5 &    50  &    100      &${\rm C}_5^2 \rtimes {\rm C}_4 $ 
                                             &   200    &        3      &   3.15   &   3.78  &  99.19 & 95.28 \\
  6 &    60  &    120      &    $\Sym_5$     &   120    &        5      &   14.04  &  16.77  &  98.07 & 457.97\\
  7 &    64  &    576      &${\rm C}_2^4 \rtimes (\Sym_3 \times \Sym_3)$ 
                                             &  2304    &       19      &   84.09  &  86.40  &  105.47 & 417.05 \\
  8 &    72  &    288      & ${\rm C}_6^2 \rtimes {\rm D}_{2 \cdot 4}$
                                             &   288    &        6      &   16.51  &  21.03  &  3934.64 & 785.93\\
  9 &    60  &     60      &    ${\rm A}_5$  &   120    &        2      &    9.55  &  16.54  &  2442.16 & 746.17\\
 10 &    81  &    162      & ${\rm C}_3 \times ({\rm C}_3^3 \rtimes {\rm C}_2) $    
                                             &   324    &        5      &   68.58  &  79.93  & 7405.01 & 1849.31 \\
 11 &    81  &    162      &${\rm C}_3^2 \wr \Sym_2$
                                             &   324    &        8      &  126.73  & 135.93  &  13945.61 & 1644.12 \\
 12 &    32  &     32      &${\rm C}_2^5$    &    32    &        5      &    0.39  &   6.64  &  $> 50$h & 170.80 \\
 13 &    64  &    256      & ${\rm D}_{2 \cdot 4} \times ({\rm C}_{4}  \rtimes {\rm D}_{2 \cdot 4}) $
                                             &   512    &        9      &   26.41  &  34.13  & $> 50$h & 609.11  \\
 14 &    96  &  11520      &$\Sym_6 \times {\rm C}_2^4$          
                                             & 11520    &        6      &  110.48  & 142.30  & $> 50$h & 5494.3 \\
 15 &    96  &     96      &${\rm C}_2^4 \rtimes {\rm C}_6$
                                             &    96    &        3      &   62.05  & 110.04  & 42395.95 & 18440.68 \\
 16 &    75  &    300      & ${\rm C}_5^2 \rtimes ({\rm C}_3 \rtimes {\rm C}_4)$          
                                             &   600    &        3      &   72.32  &  77.69  & 93621.32 & 2911.19 \\
 17 &    80  &    160      & ${\rm C}_2^4 \times {\rm D}_{2\cdot 5}$           
                                             &   320    &        6      &   92.47  & 115.56  & $> 50$h & 4195.90\\
 18 &    90  &    360      &    ${\rm A}_6$  &   360    &        8      &  292.22  & 298.09  & $> 50$h & 4053.24\\  
 19 &   100  &    100      & Equal to Nr.~5  &   100    &        3      &  189.00  & 224.83  & $> 50$h & 43376.56\\
 20 &    64  &     64      &${\rm C}_2^6$    &    64    &        6      &   26.87  & 354.06  & $> 50$h & ---
\end{tabular} }
\smallskip

\caption{Performance of the subfield algorithms (all times in seconds) \label{test_tab}}

\end{center}
\end{table}

We can read from the table that except for the first two examples, the new 
method is faster than~\cite{Kl}. Further, the lower bound of the order of the Galois
group is sharp in all examples. The row with $\#G_0$ lists the order of
the Galois starting group in the moment we stop calling the 
principal subfield algorithm. 

For the LLL-based approach, our computation is a factor of \mbox{$4.8$ -- 190} faster than~\cite{HKN}.
Following {\tt http://cpu.userbenchmark.com}, 
our hardware is about a factor of 2.3 faster (single-core int speed).
%Compared to the table given in~\citep{HKN} our hardware seems to be about a factor
%2.5 faster, as the run times of the unchanged implementation of~\citep{Kl} 
%are about that shorter. 
 
Taking the newer hardware into account, the implementation is a factor 
% \mbox{$1.9$ -- 75} 
\mbox{$2$ -- 80}
better. The improvement can partially be explained by the
faster subfield proof and a potentially better LLL implementation. 
The main reason for the speed up is the reduced number of LLL-calls.
In the degree 64 example Nr.~7 we have the smallest improvement. The number of 
$p$-adic factors is 24. We have to call the LLL for 19 of them.
In the degree 100 example, we have 100 $p$-adic factors but the LLL is called only 3 times.  
This explains why we have the biggest improvement here.

To go even further,
we would need a better strategy to select the $p$-adic factor treated 
next and a good heuristic to stop the principal subfield search
even earlier and finish with another (e.g.\ the Stauduhar) method.


\begin{thebibliography}{BFSS}
\bibitem[BCP]{BCP}
W.~Bosma, J.~Cannon, and C.~Playoust:
{\it The Magma algebra system. I. The user language.} 
J.\ Symbolic Comput.\ \textbf{24} (1997), 235 -- 265.

\bibitem[BFSS]{BFSS}
A.~Bostan, P.~Flajolet, B.~Salvy, E.~Schost: 
{\it Fast computation of special resultants.} J. Symbolic Comput. 
\textbf{41}, no.\ 1 (2006), 1 -- 29.

\bibitem[CLRS]{CLRS}
T.~H.~Cormen, C.~E.~Leiserson, R.~L.~Rivest, C.~Stein:
{\it Introduction to algorithms.} McGraw-Hill 2005

\bibitem[CH]{CH}
J.~Cannon, D.F.~Holt: 
{\it The transitive permutation groups of degree 32.} 
Experiment. Math. \textbf{17} (2008), no.\ 3, 307 -- 314.

\bibitem[DM]{DM}
J.~Dixon, B.~Mortimer: 
{\it Permutation groups.} Springer-Verlag, New York 1996.

\bibitem[DW]{DW}
E.~Dobson, D.~Witte: 
{\it Transitive permutation groups of prime-squared degree.} 
J. Algebraic Combin.\ 16 (2002), no.\ 1, 43 -- 69.

\bibitem[E1]{E1}
A.S.~Elsenhans:
{\em A note on short cosets.} Exp.\ Math.\ {\bf 23} (2014) no.\ 4, 411 -- 413. 

\bibitem[E2]{E2}
A.S.~Elsenhans:
{\it Improved methods for the construction of relative invariants for permutation groups.}
J. Symbolic Comput. {\bf 79} (2017), part 2, 211 -- 231.

\bibitem[FK]{FK}
C.~Fieker, J.~Kl\"uners:
{\em Computation of Galois groups of rational polynomials.}
LMS J.\ Comput.\ Math.\ 17 (2014), no.\ 1, 141 -- 158.

\bibitem[G]{Ge}
K.~Gei\ss ler: {\it Berechnung von Galoisgruppen \"Uber Zahl- und Funktionenk\"orpern.} 
Dissertation, Berlin, 2003.

\bibitem[HKN]{HKN}
M.~van Hoeij, J.~Kl\"uners, A.~Novocin:
{\it Generating Subfields.} 
J.\ Symbolic Comput.\ \textbf{52}, 2013, 17 -- 34.

\bibitem[HN]{HN}
M.~van Hoeij, A.~Novocin:
{\it Gradual Sub-lattice Reduction and a New Complexity for Factoring Polynomials.}
Algorithmica \textbf{63} (2012) no.\ 3, 616 -- 633.

\bibitem[HEB]{HEB}
D.F.~Holt, B.~Eick, E.A.~O'Brien:
{\it Handbook of computational group theory.}
Chapman \& Hall/CRC, Boca Raton, FL, 2005.

\bibitem[Hul]{Hul}
A.~Hulpke: 
{\it Constructing transitive permutation groups.}
J.\ Symbolic Comput.\ \textbf{39} (2005), no.\ 1, 1 -- 30.

\bibitem[Hup]{Hup}
B.~Huppert.
{\it Endliche Gruppen I.}
Berlin-New York 1967. 

\bibitem[K]{Kl}
J.~Kl\"uners:
{\it On Computing Subfields - A Detailed Description of the Algorithm.}
Journal de Theorie des Nombres de Bordeaux \textbf{10} (1998), 243 -- 271.

\bibitem[KP]{KP}
B.~McKay, A.~Piperno: 
{\it Practical graph isomorphism, II.}
J.\ Symbolic Comput.\ \textbf{60} (2014), 94 -- 112.

\bibitem[LLL]{LLL}
A.~K.~Lenstra, H.~W.~Lenstra,  L.~Lov\'asz:
{\it Factoring polynomials with rational coefficients.}
Math.\ Ann.\ \textbf{261} (1982), no.\ 4, 515 -- 534. 

\bibitem[R]{Rosen}
K.~H.~Rosen: 
{\it Discrete mathematics and its applications. 3rd ed.} 
McGraw-Hill 1995.

\bibitem[S]{St}
R.~Stauduhar: 
{\it The determination of Galois groups.}
Math.\ Comp.\ \textbf{27} (1973), 981 -- 996. 
\end{thebibliography}
\end{document}